\newtheorem{theorem}{Theorem}
\newtheorem{conjecture}[theorem]{Conjecture}
\newtheorem{remark}[theorem]{Remark}
\newtheorem{proposition}[theorem]{Proposition}
\newtheorem{claim}[theorem]{Claim}
\newtheorem{lemma}[theorem]{Lemma}
\theoremstyle{remark}
\newcommand\restr[2]{{
  \left.\kern-\nulldelimiterspace 
  #1 
  \vphantom{\big|} 
  \right|_{#2} 
  }}
\DeclarePairedDelimiter\set{\{}{\}}
\DeclarePairedDelimiter\paren{(}{)}
\DeclarePairedDelimiter\card{|}{|}
\algnewcommand{\IIf}[1]{\State\algorithmicif\ #1\ \algorithmicthen}
\algnewcommand{\EndIIf}{\unskip}
\newcommand{\calA}{\mathcal{A}}
\newcommand{\calI}{\mathcal{I}}
\newcommand{\calC}{\mathcal{C}}
\newcommand{\calD}{\mathcal{D}}
\newcommand{\Oh}{\mathcal{O}}
\newcommand{\isr}{\textsf{Independent Set Reconfiguration}}
\DeclareMathOperator\ex{ex}
\let\le\leqslant
\let\ge\geqslant
\let\leq\leqslant
\let\geq\geqslant
\let\epsilon\varepsilon
\let\phi\varphi
\begin{document}

\title[Reconfiguring independent sets on interval graphs]{Reconfiguring independent sets on interval graphs}

\author[M.~Briański]{Marcin Briański}
\address[M.~Briański, J.\ Hodor, P.\ Micek]{Theoretical Computer Science Department\\
 Faculty of Mathematics and Computer Science, Jagiellonian University, Krak\'ow, Poland}
\email{marcin.brianski@doctoral.uj.edu.pl}

\author[S.~Felsner]{Stefan Felsner}
\address[S.~Felsner]{Institut f\"ur Mathematik\\
 Technische Universit\"at Berlin\\
 Berlin \\
 Germany}
\email{felsner@math.tu-berlin.de}

\author[J.~Hodor]{Jędrzej Hodor}
\email{jedrzej.hodor@gmail.com}

\author[P.~Micek]{Piotr Micek}
\email{piotr.micek@uj.edu.pl}

\thanks{M.\ Briański, J.\ Hodor, P.\ Micek are partially supported by a Polish National Science Center grant (BEETHOVEN; UMO-2018/31/G/ST1/03718).}

\maketitle


\begin{abstract}
    We study reconfiguration of independent sets in interval graphs under the token sliding rule.
    We show that if two independent sets of size $k$ are reconfigurable in an $n$-vertex interval graph, then there is a reconfiguration sequence of length $\Oh(k\cdot n^2)$.
    We also provide a construction in which the shortest reconfiguration sequence is of length $\Omega(k^2\cdot n)$.

    As a counterpart to these results, we also establish that \isr\ is PSPACE-hard on incomparability graphs, of which interval graphs are a special case.
\end{abstract}

\section{Introduction}

Let $G$ be a graph and let $k$ be a positive integer.
The \emph{reconfiguration graph} $R_k(G)$ of independent sets of size $k$ in $G$ 
has as its vertex set the set of all independent sets of size $k$ in $G$ and 
two independent sets $I$, $J$ of size $k$ in $G$ are adjacent in $R_k(G)$ 
whenever $I\triangle J=\set{u,v}$ and $uv \in E(G)$.

When two sets $I$ and $J$ are in the same component of $R_k(G)$
we say that $I$ and $J$ are \emph{reconfigurable}.
In such case, we can perform the following transformation process: 
(1) start by placing one token on each vertex of $I$; 
(2) in each step move one of the tokens to a neighboring vertex in $G$ 
but always keep the property that the vertices occupied by tokens induce an independent set in $G$;
(3) finish with tokens occupying all vertices of $J$.
Let $(I_0,\ldots,I_{m})$ be a path from $I$ to $J$ in $R_k(G)$ so $I_0=I$, $I_{m}=J$. 
For $i\in\set{1,\ldots,m}$, let $(u_i,v_i)$ be a pair of vertices in $G$ such that 
$I_{i-1}\setminus I_{i} = \set{u_i}$ and $I_{i}\setminus I_{i-1} = \set{v_i}$.
We say that the sequence $((u_1,v_1),\ldots,(u_{m},v_{m}))$ 
is a \emph{reconfiguration sequence} from $I$ to $J$ in $G$.

A large body of research is focused on the computational complexity of the corresponding decision problem --- \isr: 
given a graph $G$ and two independent sets $I$ and $J$, determine whether $I$ and $J$ are reconfigurable.
If we do not assume anything about the input graph this problem is PSPACE-complete, thus it is natural to investigate how its complexity changes when input graphs are restricted to a particular class of graphs.
Demaine et al.~\cite{DDF-EHIOOUY14} showed that the problem can be solved in polynomial time on trees.
Lokshtanov and Mouawad~\cite{LM18} proved that it remains PSPACE-complete on bipartite graphs.
Hearn and Demaine~\cite{HD05} showed that it is PSPACE-complete on planar graphs.
There are polynomial time algorithms for the class of cographs~\cite{KMM12} as well as claw-free graphs~\cite{BKW14}.
For a general survey of problems related to reconfiguration see~\cite{survey}.

Bonamy and Bousquet~\cite{BB17} presented an algorithm that given an interval graph $G$ and two independent sets of size $k$ in $G$ verifies if the independent sets are reconfigurable in $\Oh(n^3)$ time.
Curiously, their proof does not say anything meaningful about the length of the reconfiguration sequence if the two independent sets are reconfigurable.
Since there are at most $\binom{n}{k}$ independent sets of size $k$, we get a trivial bound on the length of the reconfiguration sequence, namely: $\Oh(n^k)$.
The question whether every two reconfigurable independent sets in an interval graph are connected by a reconfiguration sequence of polynomial length of degree independent of $k$ was raised by Bousquet~\cite{B20}, which we answer in the affirmative in \cref{thm:upper-bound}.

\subsection{Our results}

In this paper we will be mainly considering interval graphs. Our main results are stated in \cref{thm:upper-bound,thm:lower-bound} below.

\begin{theorem}
\label{thm:upper-bound}
Let $G$ be an $n$-vertex interval graph and $k$ be a positive integer. 
Then every component of $R_k(G)$ has diameter in $\Oh(k\cdot n^2)$. 

Moreover, there is a polynomial time algorithm that given $G$, $k$, and two independent sets $I$, $J$ of size $k$ in $G$ decides if $I$ and $J$ are reconfigurable and if so outputs a reconfiguration sequence connecting them of length $\Oh(k\cdot n^2)$.
\end{theorem}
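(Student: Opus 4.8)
The plan is to pick out in each component of $R_k(G)$ a canonical ``leftmost'' independent set and to show that every independent set in that component can be driven to it by a short reconfiguration sequence; the diameter bound and the algorithm then follow at once. Fix an interval representation of $G$ with all $2n$ endpoints distinct, and write $l(v)<r(v)$ for the endpoints of an interval $v$. A size-$k$ independent set $I$ is a family of $k$ pairwise disjoint intervals, listed left to right as $v_1,\dots,v_k$ with $r(v_i)<l(v_{i+1})$; one observes that a single token move preserves this left-to-right order of the $k$ tokens (a token can only slide within the gap left by its two neighbours), so the tokens can be consistently indexed $1,\dots,k$ along any reconfiguration sequence. Let $\rho(v)\in\set{1,\dots,n}$ be the rank of $r(v)$ among all right endpoints, order size-$k$ independent sets by comparing the vectors $(\rho(v_1),\dots,\rho(v_k))$ lexicographically (a total order $\preceq$), and set $\Psi(I):=\sum_{i=1}^{k}\rho(v_i)\le k\cdot n$.

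\textbf{Step 1: a canonical set.} I would first prove that every component $\calC$ of $R_k(G)$ has a $\preceq$-least element $L=L(\calC)=(w_1,\dots,w_k)$ which \emph{dominates} $\calC$, in the sense that $\rho(w_i)\le\rho(v_i)$ for all $i$ and all $(v_1,\dots,v_k)\in\calC$. This is a lattice-type statement --- the set of right-endpoint-rank vectors occurring in $\calC$ is closed under coordinatewise minimum --- and making the underlying exchange argument rigorous is where the first bit of care is needed. It implies that $L$ is the unique minimizer of $\Psi$ on $\calC$, so $I\neq L$ entails $\Psi(I)>\Psi(L)$.

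\textbf{Step 2: settling one token, left to right.} Process the tokens in order $j=1,\dots,k$. When token $j$ is treated, tokens $1,\dots,j-1$ already occupy $w_1,\dots,w_{j-1}$ and will never move again; if token $j$ is already on $w_j$ there is nothing to do, and otherwise $r(w_j)<r(v_j)$ by domination (here $v_j$ denotes its current position, which may differ from that in $I$). Every interval contained in the window $W=(r(w_{j-1}),l(v_{j+1}))$ (with the conventions $r(w_0)=-\infty$ and $l(v_{k+1})=+\infty$) is automatically disjoint from all other tokens, so the token currently on $v_j$ may follow any path of $G[W]$; if $w_j$ lies in the same component of $G[W]$ as $v_j$, a shortest path takes the token there in at most $n$ moves. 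If it does not, one must first push the tokens $v_{j+1},v_{j+2},\dots$ rightwards --- a cascade possibly reaching all of them --- until some ``long'' interval, spanning from near $w_j$ to past $v_j$, enters the widened window and bridges $v_j$ to $w_j$; then token $j$ is slid across, and the displaced tokens are fetched back left when their own turn comes. \textbf{Proving that such a cascade always succeeds, and that it uses only $\Oh(k\cdot n)$ moves, is the main obstacle.} Success amounts to showing that $v_j$ and $w_j$ are connected in the subgraph of $G$ induced by the intervals beginning after $r(w_{j-1})$ (which one should be able to extract from the hypothesis that the full reconfiguration already carries token $j$ to $w_j$), and the move bound follows once each cascaded token is moved along a simple path, hence $\Oh(n)$ times. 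The $\Omega(k^2\cdot n)$ lower bound of \cref{thm:lower-bound} shows that some such auxiliary motion is genuinely unavoidable, so this step cannot be dispensed with.

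\textbf{Step 3: conclusion and algorithm.} Settling all $k$ tokens turns any $I\in\calC$ into $L(\calC)$ using $k$ rounds of $\Oh(k\cdot n)$ moves each, hence $\Oh(k^2\cdot n)\subseteq\Oh(k\cdot n^2)$ moves in total (using $k\le n$). For $I$ and $J$ in a common component, concatenating the sequence $I\to L(\calC)$ with the reverse of $J\to L(\calC)$ yields a reconfiguration sequence of length $\Oh(k\cdot n^2)$, which also bounds the diameter of $\calC$. Finally, every ingredient above (computing components of $G[W]$, shortest paths, the cascade) is polynomial, so $L(\calC)$ together with the sequence reaching it can be computed in polynomial time; $I$ and $J$ are then reconfigurable if and only if the canonical set computed from $I$ equals that computed from $J$, in which case the sequence of length $\Oh(k\cdot n^2)$ above is returned.
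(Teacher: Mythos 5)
Your overall strategy coincides with the paper's: identify a canonical extreme independent set in each component, prove it is reachable, and reconfigure each of $I,J$ to it. Your Step~1 is, modulo the choice of which canonical set to aim for (you use the all-leftmost, the paper uses the ``first $k-1$ leftmost, last one rightmost''), exactly the content of the paper's Lemma on $p$-extreme sets; the paper proves it via a ``splicing'' lemma about restricting a reconfiguration sequence to a window of token indices, and you would need an equivalent device to make the exchange argument and the coordinatewise-minimum closure rigorous (it is not as routine as it might look, since the usual lattice arguments do not directly apply to reconfiguration components).

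The genuine gap is Step~2, and you have correctly flagged it as the main obstacle --- but it is not merely a technical nuisance; it is where the paper's argument diverges entirely. You propose to settle the tokens once each, left to right, with a single rightward cascade per token, for $\Oh(k\cdot n)$ moves per token and $\Oh(k^2 n)$ overall. The paper does \emph{not} settle tokens in one pass. Its main algorithm maintains a working index $j$ that repeatedly moves \emph{backwards} as well as forwards: whenever a two-token ``push apart'' call on tokens $j,j+1$ makes progress, the index decreases, and only when no progress is made does it advance. Termination and the length bound come from a potential-function argument (each extreme coordinate changes monotonically, giving $\Oh(kn)$ outer iterations of an $\Oh(n)$-move subroutine, hence $\Oh(kn^2)$ total). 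A single left-to-right pass that freezes each token forever is not shown to work, and there is a concrete reason to doubt it: after you push tokens $j+1,\dots,k$ far right to clear a path for token $j$, the positions that tokens $j+2,\dots,k$ need to occupy in order to let token $j+1$ come back to $w_{j+1}$ may be different from where the previous cascade left them, so the auxiliary motion can compound across rounds, and nothing in your description bounds it by $\Oh(kn)$ per round. Note also that your claimed $\Oh(k^2 n)$ would strictly beat the paper's $\Oh(kn^2)$ when $k\ll n$ and would close a gap the authors explicitly leave open (they remark that for $k$ small relative to $n$ the upper and lower bounds differ by a factor of $\Oh(n)$). So either your Step~2 hides a genuinely new idea that the paper does not have, or --- far more likely --- the single-pass cascade is too optimistic and the complexity analysis does not go through as stated.
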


A lower bound construction shows that the bound on the length of a reconfiguration sequence given in
\cref{thm:upper-bound} is close to tight.
\begin{theorem}
\label{thm:lower-bound}
For all integers $m\geq1$ and $k\geq1$, 
there is an interval graph $G_{m,k}$ with $|V(G_{m,k})|$ in $\Oh(m+k)$ and two reconfigurable independent sets $I$, $J$ of size $k$ in $G_{m,k}$ such that every reconfiguration sequence connecting $I$ and $J$ in $G$ is of length $\Omega(k^2\cdot m)$.
\end{theorem}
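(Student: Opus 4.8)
The plan is to build an explicit interval graph $G_{m,k}$ together with configurations $I$ and $J$, and then prove that every reconfiguration sequence between them is long. For $G_{m,k}$ I would use a ``corridor'' --- an induced path on $\Theta(m)$ vertices --- with a ``holding gadget'' glued to each of its two ends, each gadget built on $\Theta(k)$ vertices and large enough to store all $k$ tokens at once, so that $|V(G_{m,k})| = \Theta(m)+\Theta(k)$, which is $\Oh(m+k)$ as required. The design constraint, and the whole point of the construction, is that the holding gadgets be \emph{rigid}: the interval layout must guarantee that a token parked ``deep'' inside a gadget cannot be released until the tokens closer to the corridor mouth --- and the corridor itself --- have been cleared, and conversely that two tokens cannot be routed through the corridor at once while both make net progress. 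Here it is convenient that token sliding on an interval graph never reverses the left-to-right order of the tokens --- two disjoint intervals cannot swap sides without overlapping at some intermediate step --- so $I$ and $J$ are forced to be, and will be chosen, consistent with a single token order; concretely $I$ packs the $k$ tokens into the left gadget and $J$ packs them into the right one.

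The first half of the proof is the easy direction: one exhibits a single reconfiguration sequence --- repeatedly release the frontmost token of the left gadget, drive it across the corridor, insert it into the right gadget, and re-pack both gadgets --- and checks by routine bookkeeping that it uses $\Theta(k^2 m)$ slides, which in particular shows that $I$ and $J$ are reconfigurable.

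The substance is the lower bound: \emph{every} reconfiguration sequence from $I$ to $J$ uses $\Omega(k^2 m)$ slides. For this I would fix a potential $\Phi$ on configurations --- for instance the sum over the tokens of the distance from each token to its home gadget, or a depth-weighted count of the tokens currently lying in the corridor --- chosen so that a single slide changes $\Phi$ by $\Oh(1)$, and then argue that rigidity forces $\Phi$ to oscillate: it must make $\Omega(k)$ separate excursions, one per token transfer, and each excursion must carry $\Phi$, and hence the running slide count, through a swing of $\Omega(km)$. Multiplying gives $\Omega(k^2 m)$. The step I expect to be the genuine obstacle is exactly this ``no shortcut'' claim --- ruling out clever sequences that pipeline several tokens through the corridor simultaneously, or that only partially unpack a gadget. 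I would attack it by isolating a structural invariant of the gadget, of the shape ``if a gadget holds a token anywhere but its frontmost slot then the corridor is essentially empty'', showing that this invariant is preserved by every single slide, and deducing that it forces the $k$ token transfers to be performed one at a time, so that the $\Omega(k)$ full corridor traversals charged to each transfer are genuinely unavoidable.
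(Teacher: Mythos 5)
There is a genuine gap, and it is conceptual, not merely a matter of detail. Your construction --- a long corridor flanked by two holding gadgets, where $I$ parks all tokens in the left gadget and $J$ parks them in the right --- admits a reconfiguration sequence of length $\Oh(k\cdot m + k^2)$: pop the token nearest the corridor mouth from the left gadget, drive it across, slot it into the right gadget, and repeat $k$ times. Nothing in the design you describe (LIFO rigidity of the gadgets, one-token-at-a-time corridor) obstructs this; the ``rigidity'' invariant you propose, ``if a gadget holds a token anywhere but its frontmost slot then the corridor is essentially empty,'' is \emph{preserved} by exactly this naive strategy. So the obstacle is not, as you frame it, ruling out clever sequences that pipeline; it is that the obvious non-clever sequence is already too short. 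Your potential $\Phi$ (sum of token distances to home) starts at $\Theta(km)$ and descends monotonically to $0$ under the naive strategy, with no oscillation at all, so the $\Omega(k^2 m)$ bound cannot be extracted from it. The claim that each of the $\Omega(k)$ excursions must swing $\Phi$ by $\Omega(km)$ is the entire content of the theorem, and you have neither a mechanism that forces it nor a construction in which it could be true.

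What the paper does differently, and what your design is missing, is a third family of intervals: \emph{long} intervals $\ell_1, \dots, \ell_{k-1}$ and $r_1, \dots, r_k$ that stretch from the parking slots well into the corridor, with nested extents chosen so that whenever a single token steps from a left parking slot onto a long interval (a necessary intermediate move), the $\Theta(k)$ tokens currently sitting in the corridor are forced off that long interval's span and must migrate to the far end of the corridor, and conversely when a token steps onto a long interval on the other side they must migrate back. The shortest sequence is thereby forced through a chain of checkpoint configurations $C_0, \dots, C_{2k}$ in which the ``heavy'' middle tokens alternate between the left end and the right end of the corridor; each of the $\Theta(k)$ heavy tokens makes $\Theta(k)$ full $\Theta(m)$-length traversals, which is where $\Theta(k^2 m)$ comes from. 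Your proposal has no analogue of the long intervals and hence no reason for any token to ever move backwards, so the back-and-forth --- which is the whole source of the extra factor of $k$ --- never happens. To repair the proposal you would need to redesign the gadget so that inserting a token into the destination gadget evicts the corridor population to the opposite end, and then prove (as the paper does via a careful analysis of which long intervals a heavy token can and cannot occupy) that no routing through the long intervals can short-circuit this forced migration.
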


In light of \cref{thm:lower-bound}, so long as \(k \in \Theta(m)\), the bound of \cref{thm:upper-bound} is asymptotically tight. However, if \(k\) is small compared to $n$, the bounds of \cref{thm:upper-bound,thm:lower-bound} can differ by a factor of $O(n)$.
This leads to an interesting problem in its own right. The authors are not aware of any example giving a superlinear lower bound on the length of a reconfiguration sequence when the number of tokens is constant---\emph{even on the class of all graphs}. 
Specifically, the case $k=2$ remains open. 

Interval graphs are a special case in the more general class of incomparability graphs. As a somewhat expected (and indeed easy) result we have obtained the following theorem.

\begin{theorem}\label{thm:hardness}
    \isr\ is PSPACE-hard on incomparability graphs, even on incomparability graphs of posets of width at most \(w\) for some constant \(w \in \mathbb{N}\).
\end{theorem}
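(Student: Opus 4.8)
The plan is to reduce from \isr\ on general graphs, which is PSPACE-complete (indeed, already on planar graphs by Hearn and Demaine~\cite{HD05}). The key structural fact is that the token sliding rule only cares about the edge set of $G$ and the adjacencies along which tokens move, so if we can realise an arbitrary graph $G$ as an \emph{induced} subgraph of an incomparability graph $H$ in a way that does not create new moves between the ``relevant'' vertices, then reconfiguration in $G$ is equivalent to reconfiguration in $H$. Since every graph is an incomparability graph of some poset (e.g.\ take a transitive orientation of the complement — this exists iff the complement is a comparability graph, which is \emph{not} always the case), the naive approach fails, and the work is in controlling the width. So instead I would use a gadget-based reduction: take an appropriate PSPACE-hard variant of \isr\ on a bounded-degree or otherwise well-structured graph class, and simulate each vertex and edge with a constant-size poset gadget of bounded width.

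Concretely, first I would fix a convenient PSPACE-hard source problem. A natural choice is \textsf{Nondeterministic Constraint Logic} (the engine behind Hearn--Demaine~\cite{HD05}), or directly \isr\ on a class of graphs of bounded bandwidth/pathwidth; the point is to have a problem whose instances have bounded ``local complexity'' so that the simulating poset does not need unbounded width. Second, I would design the poset $P$: its elements are grouped into blocks, one block per vertex (and per edge) of the source instance, with the partial order inside and between blocks chosen so that (a) the incomparability graph restricted to the token-carrying elements is exactly (an induced copy of) the source graph, and (b) no two elements of the same block are incomparable, so that each block contributes only $1$ to the width, giving $\width(P) = O(1)$ — this is where the ``constant $w$'' comes from. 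Third, I would translate the two input independent sets $I,J$ of the source instance into independent sets $I',J'$ in the incomparability graph, padding with fixed tokens on ``filler'' elements if the reduction needs to preserve token count.

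The heart of the argument is the equivalence: $I$ and $J$ are reconfigurable in the source graph if and only if $I'$ and $J'$ are reconfigurable in the incomparability graph of $P$. The forward direction is immediate since the source graph sits as an induced subgraph. For the backward direction I must argue that any reconfiguration sequence in the big graph can be ``projected'' back: the filler/gadget tokens can never usefully move (or, if they move, the moves can be rearranged or undone without affecting the outcome), so after normalisation every move corresponds to a legal move in the source instance. This bookkeeping — showing the gadget does not leak extra reconfiguration power — is the main obstacle, and it is exactly the part that needs care rather than being ``easy''; the rest is routine once the gadget is pinned down. The final step is to observe that $P$ is constructed in polynomial time and has width bounded by an absolute constant $w$, so \isr\ on incomparability graphs of posets of width at most $w$ is PSPACE-hard, and since membership in PSPACE is clear, it is PSPACE-complete in that regime as well.
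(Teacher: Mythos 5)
Your proposal is a plan rather than a proof: you acknowledge that the ``heart of the argument'' (showing the gadget does not leak extra reconfiguration power) is the hard part, but you never actually describe the gadgets, so nothing has been established. More seriously, your width argument is wrong as stated. You say each block is a chain, hence ``contributes only $1$ to the width, giving $\width(P)=O(1)$.'' But width is the size of the largest antichain in the \emph{whole} poset, and such an antichain can pick one element from each of many different blocks as long as elements from distinct blocks are pairwise incomparable. With one block per vertex of an arbitrarily large source instance, this gives width up to the number of blocks, not a constant. To get constant width you need an additional global structure forcing all but $O(1)$ of the blocks to be mutually comparable --- and that structure is exactly what your sketch is missing. (There is also a secondary obstruction you gesture at but do not resolve: you would need the source graph to actually embed as an induced subgraph of a bounded-width incomparability graph, which rules out most natural source classes, planar graphs included.)

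The paper takes a genuinely different route that sidesteps all of this. Instead of trying to realise a hard instance of \isr\ as an induced subgraph, it reduces from $H$\textsf{-Word Reachability}, PSPACE-complete for a fixed digraph $H$ by a theorem of Wrochna. The poset $P_n(H)$ has ground set $V(H)\times\{1,\dots,n\}$ with $(x,i)\prec(y,j)$ iff $j=i+1$ and $xy\in E(H)$, or $j>i+1$. Because any two elements from positions $i,j$ with $|i-j|\geq 2$ are always comparable, every antichain meets at most two consecutive layers, so the width is at most $2|V(H)|$ --- a constant depending only on $H$, which is fixed. Chains of size $n$ correspond exactly to $H$-words of length $n$, each layer $V(H)\times\{i\}$ is a clique in the incomparability graph so every token move stays within its layer, and hence reconfiguration moves in the incomparability graph correspond exactly to single-letter changes of $H$-words. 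The equivalence is then immediate --- no gadget normalisation, no ``projecting back.'' The word-reachability framework is doing for the paper precisely what your unspecified gadget was supposed to do, but with the bounded-width property built in from the start.
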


Another interesting specialization of the incomparability graphs are \emph{permutation graphs}. 
In~\cite{bipartitepermutation} the authors show a polynomial time algorithm solving \isr\ on bipartite permutation graphs.
We suspect that general permutation graphs should be amenable to methods similar to the tools we use here on interval graphs, however we have not been able to successfully apply them. 

\begin{conjecture}
    \isr\ is solvable in polynomial time when restricted to permutation graphs.
\end{conjecture}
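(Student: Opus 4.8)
The plan is to transport the structural approach behind \cref{thm:upper-bound} to a two-dimensional setting. A permutation graph $G$ is both a comparability graph and a co-comparability graph; in particular $\overline{G}$ is a comparability graph, so fixing a transitive orientation of $\overline{G}$ produces a poset $Q$ with comparability graph $\overline{G}$, and since $G$ itself is a comparability graph this $Q$ has dimension at most two (the co-comparability graph of $Q$ is exactly $G$). Under this dictionary an independent set of $G$ of size $k$ is exactly a $k$-element chain of $Q$, and a token slide (replacing $u$ by a neighbour $v$ in $G$) replaces a chain element $u$ by an element $v$ with $u \parallel_Q v$ so that the result is again a chain --- the exact analogue of the picture used for interval graphs, where independent sets are chains in an interval order. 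The new feature is that $Q$ carries a genuine planar geometry instead of the one-dimensional sweep that interval orders provide: fixing a realizer $Q = L_1 \cap L_2$ and placing each vertex $v$ at the point $(\mathrm{pos}_{L_1}(v), \mathrm{pos}_{L_2}(v))$, a chain of $Q$ becomes a strictly increasing staircase of points, independence in $G$ is precisely this monotonicity, and incomparability of $u,v$ means $v$ lies strictly up-left or strictly down-right of $u$.

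A first, easy observation should carry over verbatim from the interval case: a slot invariant. If the tokens occupy a chain $c_1 \prec \dots \prec c_k$ of $Q$ and a single slide replaces $c_j$ by $v$, then $v$ must be comparable to every $c_i$ with $i \ne j$, and being incomparable to $c_j$ it is forced that $c_{j-1} \prec v \prec c_{j+1}$; so $v$ still occupies ``slot $j$'' and the left-to-right order of tokens along the staircase never changes. Consequently each token is permanently confined to the planar box spanned by its two neighbours, and one would next try to define, inside each component of $R_k(G)$, a canonical configuration --- say the staircase that is lexicographically least in the $L_1$-coordinates --- and prove the two statements that drive the interval-graph argument: (a) from any independent set one can reach the canonical one using $\mathrm{poly}(n)$ slides, by pushing tokens one at a time toward their canonical positions while respecting the box structure; and (b) two independent sets are reconfigurable if and only if their canonical forms agree, which is then checkable in polynomial time. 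The diameter bound for $R_k(G)$ and the polynomial-time algorithm would follow as in \cref{thm:upper-bound}.

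The step I expect to be the main obstacle is (a), specifically the potential/charging argument bounding the number of slides. On the line, a token being driven toward its target advances monotonically and a blocked token simply waits for the one in front of it to clear; in the plane a token may be forced to detour up and down inside its box to get around a neighbouring token, so its trajectory in the realizer coordinates need not be monotone, and it is not obvious what quantity decreases along a shortest reconfiguration. One promising route is to order the tokens by slot and argue that the ``frontier'' token --- the one whose box is currently extreme in some adaptively chosen direction --- can always be driven to its canonical position in few moves, then recurse; but making the recursion clean is hampered by the fact that a $2$-dimensional poset, unlike an interval order, need not have an element that is extreme in a sufficiently strong sense, so one may instead have to induct on the number of tokens or on a geometric measure such as total area swept, and then verify that the polynomial degree does not grow out of control. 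A secondary, more routine point is that the two incomparable ``directions'' of moves (up-left versus down-right) must be handled symmetrically, and one should check that alternating between them cannot create long cycles of wasted moves. Overcoming the charging argument is, in our view, the crux; the remainder of the framework is a faithful two-dimensional rewrite of the interval-graph proof.
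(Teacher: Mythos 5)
The statement you are addressing is stated in the paper as a \emph{conjecture}: the authors explicitly say they suspect the interval-graph tools should transfer to permutation graphs but that they were unable to make them work. Your text is, by its own admission, a research programme rather than a proof: you correctly set up the dictionary (a permutation graph is the incomparability graph of a two-dimensional poset $Q$, independent sets are chains, and the slot invariant $c_{j-1}\prec v\prec c_{j+1}$ does follow by transitivity exactly as you argue), but you then defer the decisive steps. Concretely, what is missing is the two-dimensional analogue of the machinery that carries the whole interval-graph argument, namely \cref{lem:splicing} and \cref{lem:repsets_alt}. In the interval case, the splicing argument works because extremality of a single token in the one-dimensional order is already enough to guarantee compatibility with the neighbouring tokens of \emph{every} configuration in the component: if $x_i$ is $\leq_{\mathrm{right}}$-minimal over the component, then $x_i$ lies entirely to the left of $b_{i+1}$ for any traversed set $B$, so the ``aligned'' set is independent. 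In a genuinely two-dimensional poset this fails: a position of token $i$ that is minimal in the $L_1$-coordinate may be very high in the $L_2$-coordinate, hence incomparable to (i.e., adjacent in $G$ to) the $(i+1)$-th token of some other configuration in the same component. So your proposed canonical configuration (the $L_1$-lexicographically least staircase) is not shown to be an independent set of the component, let alone to lie in it or to be reachable; the statement you call (b), that reconfigurability is equivalent to equality of canonical forms, is exactly the content that needs a new idea.

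In addition, the step you flag as the ``crux'' --- a potential or charging argument bounding the number of slides when driving tokens to canonical positions --- is left entirely open, and it is not a routine verification: the interval-graph bound in \cref{thm:upper-bound} rests on the monotonicity invariants of \cref{alg:solve} (\cref{inv:gapsexpand}), which again exploit the one-dimensional extremal structure that is absent here. So the proposal identifies the right obstacles but does not overcome them; as it stands it neither proves the conjecture nor goes beyond what the paper itself already observes about the difficulty.
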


\section{Preliminaries}
A \emph{component} of a graph $G$ is a non-empty induced subgraph of $G$ that is connected and is vertex-maximal under these properties.
The \emph{length} of a path is the number of edges in the path.
The distance of two vertices $u$, $v$ in a graph $G$ is the minimum length of a path connecting $u$ and $v$ in $G$.
The \emph{diameter} of a connected graph is the maximum distance between any pair of vertices in the graph.

A graph $G$ is an \emph{interval graph} if the vertices of $G$ can be associated with intervals on the real line in such a way that two vertices are adjacent in $G$ if and only if the corresponding intervals intersect.
Given an interval graph we will always fix an interval representation and identify the vertices with the corresponding intervals. 
We will also assume that all the endpoints of intervals in the representation are pairwise distinct. 
This can be easily achieved by perturbing the endpoints where it is needed.

Let $S$ be a reconfiguration sequence from $I$ to $J$ in $G$. 
We sometimes say that we \emph{apply}~$S$ to $I$ in $G$ and obtain an independent set $S(I)$.
When $S=((u,v))$ we also simply say that we \emph{apply} a pair $(u,v)$ to $I$ and obtain $S(I)=(I\setminus\set{u})\cup\set{v}$.

Given a sequence $S$, we denote by $\restr{S}{t}$ the prefix of $S$ of length $t$ (so $\restr{S}{0}$ is the empty sequence).
Given a reconfiguration sequence $S=((u_1,v_1),\ldots,(u_m,v_m))$ from $I$ to $J$ in $G$, 
clearly for each $t\in\set{0,\ldots,m}$, the prefix $\restr{S}{t}$ is a reconfiguration sequence from $I$ to the set $\restr{S}{t}(I)$ in $G$.
Thus, we also have $\restr{S}{0}(I)=I$.

\section{Upper bound: The Algorithm}

In this section we are going to prove Theorem~\ref{thm:upper-bound}.
For brevity, we omit implementation details as well as running time analyses of the algorithms outlined in this section. We note that a straightforward implementation of \cref{alg:solve} below (which is the procedure whose existence implies \cref{thm:upper-bound}) would yield a \(\mathcal{O}(n^3)\) algorithm.

Let $G$ be an $n$-vertex interval graph and let $k$ be a positive integer.
We fix an interval representation of $G$ distinguishing all the endpoints.
There are two natural linear orderings on the vertices of $G$: 
$\leq_{\text{left}}$ the order increasing along the left endpoints of the intervals and
$\leq_{\text{right}}$ the order increasing along the right endpoints of the intervals.

An independent set in $G$ is a set of pairwise disjoint intervals, as such they are naturally ordered on the line.
Thus given an independent set $A=\set{a_1,\ldots,a_{\ell}}$ in $G$ we will treat it as a tuple of intervals $(a_1,\ldots,a_{\ell})$ with $a_1 < \cdots < a_{\ell}$ on the line.
We define the projection $\pi_i(A)=a_i$ for each $i\in\set{1,\ldots,\ell}$.
Also when we apply $(u,v)$ to an independent set $A$, 
we say that $(u,v)$ \emph{moves the $i$-th token} of $A$ when $u=a_i$.

Let $\ell$ be a positive integer and let \(\calC\) be a non-empty family of independent sets each of size $\ell$ in  $G$. 
For $j\in\set{1,\ldots,\ell}$ we define
\[
    \ex_j (\calC, \text{left}) = \min_{\leq_{\text{right}}} \set{ \pi_j(A) \ \colon \ A \in \calC }, \quad
    \ex_j (\calC, \text{right}) = \max_{\leq_{\text{left}}} \set{ \pi_j(A) \ \colon \ A \in \calC }.
\]
For $p \in \set{0,\dots,\ell}$, we define the \emph{$p$-extreme set} of $\calC$ to be
\[
\bigcup_{1\leq j \leq p} \set{\ex_j(\calC,\mathrm{left})}\quad \cup 
\bigcup_{p+1\leq j \leq \ell} \set{\ex_j(\calC,\mathrm{right})}.
\]

We are going to show (\cref{lem:repsets_alt}) that every component $\calC$ of $R_k(G)$ contains all 
its $p$-extreme sets for $p\in\set{0,\ldots,k}$. 
This gives a foundation for our algorithm: given two independent sets $I$ and $J$ as the input, we are going to devise two reconfiguration sequences, transforming $I$ into the $(k - 1)$-extreme set of its component in $R_k(G)$ and $J$ into the $(k - 1)$-extreme set of its component in $R_k(G)$ respectively.
As the \((k - 1)\)-extreme set is a function of a connected component, we conclude that \(I\) and \(J\) are reconfigurable if and only if the obtained \((k - 1)\)-extreme sets are equal.
Thus, the essence of our work is to show that every independent set $I$ can be reconfigured in $\Oh(k\cdot n^2)$ steps into the $(k - 1)$-extreme set of its component in $R_k(G)$.

The technical lemma below is our basic tool used to reason about and manipulate reconfiguration sequences.

\begin{lemma}\label{lem:splicing}
  Let \(A = (a_1, \dots, a_\ell)\) and \(X = (x_1, \dots, x_\ell)\) be two independent sets in \(G\) and let \(S = ((u_1, v_1), \dots, (u_m, v_m))\) be a reconfiguration sequence from \(A\) to \(X\). With \(\calA = \set*{\restr{S}{t} (A) \ \colon \ t \in \set{0, \dots, m}}\)
  we denote the of all independent sets traversed from \(A\) to \(X\) along \(S\).
  Suppose that there are $i,j \in \set{0, \dots, \ell+1}$ with $i < j$ such that
    \begin{alignat*}{3}
        &\ex_{i}(\calA, \mathrm{left}) &= x_{i} &\quad\text{if } i\geq1, \text{ and}\\
        &\ex_{j} (\calA, \mathrm{right})\  &= x_{j} &\quad\text{if } j\leq\ell.
    \end{alignat*}

    Then \(A' = (x_1, x_2, \dots, x_{i}, a_{i+1}, \dots, a_{j-1} , x_{j}, \dots, x_\ell)\) is also an independent set in \(G\). 
    Moreover, if we let $S'$ be $S$ restricted to those pairs $(u_t,v_t)$ with $u_t$ being at a position $p$ with $i<p<j$
    in $\restr{S}{t-1}(A)$,
    then $S'$ is a reconfiguration sequence from \(A'\) to \(X\) in $G$.
\end{lemma}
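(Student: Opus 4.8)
The plan rests on two preliminary facts. \emph{(Positions are preserved.)} If a pair $(u,v)$ is applied to an independent set $B=(b_1,\dots,b_\ell)$ with $u=b_p$, then $v$ occupies position $p$ in $(B\setminus\set{u})\cup\set{v}$: since $uv\in E(G)$ the interval $v$ meets $b_p$, so the right endpoint of $v$ is at least the left endpoint of $b_p$ and the left endpoint of $v$ is at most the right endpoint of $b_p$; as $v$ must be disjoint from $b_{p-1}$ (which ends before $b_p$ starts) and from $b_{p+1}$ (which starts after $b_p$ ends), $v$ lies strictly between $b_{p-1}$ and $b_{p+1}$ on the line. Hence along $S$ one can track the trajectory of the $p$-th token, running from $a_p$ to $x_p$, and a move $(u_t,v_t)$ of $S$ moves the $p$-th token precisely when $u_t=\pi_p(\restr{S}{t-1}(A))$. \emph{(Barriers.)} Since $x_i=\ex_i(\calA,\mathrm{left})$ has the smallest right endpoint among the intervals $\set{\pi_i(B) \colon B\in\calA}$, one of which is $a_i=\pi_i(A)$, the right endpoint of $x_i$ is at most that of $a_i$ and, for every $B\in\calA$, the right endpoint of $\pi_i(B)$ is at least that of $x_i$ (when $i\ge1$); symmetrically, for every $B\in\calA$ the left endpoint of $\pi_j(B)$ is at most that of $x_j$ (when $j\le\ell$). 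Independence of $A'$ follows at once: if $j\ge i+2$, then the right endpoint of $x_i$ (at most that of $a_i$) is strictly below the left endpoint of $a_{i+1}$, so the block $x_1<\dots<x_i$ lies entirely to the left of $a_{i+1},\dots,a_{j-1}$, and symmetrically $x_j,\dots,x_\ell$ lies entirely to their right; each block is independent, so $A'$ is an independent set listed in left-to-right order (empty blocks, occurring when $i=0$ or $j=\ell+1$, cause no trouble, and when $j=i+1$ we simply have $A'=X$).

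For the reconfiguration sequence, write $t_1<\dots<t_q$ for the indices retained in $S'$, put $B_t=\restr{S}{t}(A)$ and $B'_s=\restr{S'}{s}(A')$, and call positions $i+1,\dots,j-1$ \emph{middle}. I would prove by induction on $s$ that $B'_s$ agrees with $B_{t_s}$ on every middle position and carries $x_1,\dots,x_i$ on positions $1,\dots,i$ and $x_j,\dots,x_\ell$ on positions $j,\dots,\ell$; for $s=0$ this is the definition of $A'$. In the inductive step, move $t_{s+1}$ moves the token at some middle position $p$ of $B_{t_{s+1}-1}$. The moves of $S$ with index strictly between $t_s$ and $t_{s+1}$ are not retained, hence move only non-middle tokens, so $B_{t_{s+1}-1}$ agrees with $B_{t_s}$ on middle positions and, by the inductive hypothesis, $u_{t_{s+1}}$ is exactly the $p$-th token of $B'_s$. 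The pair $(u_{t_{s+1}},v_{t_{s+1}})$ applied to $B'_s$ is then legal: $v_{t_{s+1}}$ is disjoint from the other middle tokens of $B'_s$, which coincide with the corresponding tokens of $B_{t_{s+1}-1}$ while $B_{t_{s+1}}$ is independent; and, by position preservation, $v_{t_{s+1}}$ sits at position $p$ in $B_{t_{s+1}}\in\calA$, so the right endpoint of $x_i$ is at most that of $\pi_i(B_{t_{s+1}})$, which is strictly below the left endpoint of $v_{t_{s+1}}$, keeping $v_{t_{s+1}}$ disjoint from $x_1,\dots,x_i$; symmetrically $v_{t_{s+1}}$ is disjoint from $x_j,\dots,x_\ell$. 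By position preservation $v_{t_{s+1}}$ again occupies position $p$ in $B'_{s+1}$, so the invariant survives the step. Finally, the moves of $S$ after $t_q$ move only non-middle tokens, so the middle part of $B_{t_q}$ already equals that of $B_m=X$, namely $x_{i+1},\dots,x_{j-1}$; hence $B'_q=X$, and $S'$ is a reconfiguration sequence from $A'$ to $X$, as desired.

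The first paragraph is essentially bookkeeping once position preservation is isolated; the crux is the second, where one maintains the coupling between the dynamic token positions along $S$ and along $S'$ and verifies the legality of every retained move --- which is exactly the place where the two one-sided barriers furnished by $\ex_i(\calA,\mathrm{left})=x_i$ and $\ex_j(\calA,\mathrm{right})=x_j$ are used. The main points of care are that non-retained moves never disturb the middle block and that the boundary cases $i=0$ and $j=\ell+1$ are treated (where the corresponding barrier hypothesis is absent but also not needed).
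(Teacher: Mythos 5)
Your proof is correct and follows essentially the same route as the paper: both reduce independence of the intermediate sets along $S'$ to the observation that each one is obtained from some $B\in\calA$ by freezing positions $1,\dots,i$ at $x_1,\dots,x_i$ and positions $j,\dots,\ell$ at $x_j,\dots,x_\ell$ (what the paper calls the ``aligned set'' of $B$), which is independent because of the two barrier inequalities. The one visible difference is that you make explicit two facts the paper leaves as ``by construction'': that token sliding in an interval graph preserves ordinal positions, and the induction showing $\restr{S'}{s}(A')$ is the aligned set of $\restr{S}{t_s}(A)$; this fills a small but real gap in the paper's exposition without changing the underlying idea.
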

\begin{remark}\label{rem:splicing}
    Since none of the first \(i\) tokens nor the last \(l + 1 - j\) tokens are affected by \(S'\) we can alternatively conclude that \(S'\) transforms \((a_{i + 1}, \dots, a_{j - 1})\) into \((x_{i + 1}, \dots, x_{j - 1})\) in \(G \backslash \bigcup \limits_{p \not \in \set{i + 1, \dots, j - 1}} N[x_p]\)
\end{remark}
\begin{proof}
    Let $B=(b_1,\ldots,b_{\ell})$ be a set in $\calA$. 
    The \emph{aligned} set of $B$ is defined as $(x_1,\ldots,x_{i},b_{i+1},\ldots,b_{j-1},x_{j},\ldots,x_{\ell})$.
    We claim that the aligned set of $B$ is an independent set in $G$.
    Note that some of the three parts $Y_1=(x_1,\ldots,x_{i})$, $Y_2=(b_{i+1},\ldots,b_{j-1})$, $Y_3=(x_{j},\ldots,x_{\ell})$ might be empty.
    Since all three parts are contained in an independent set, i.e.\ $X$ or $B$, 
    all we need to show is that 
    (1) if $Y_1$ and $Y_2$ are non-empty, then $x_{i}$ is completely to the left of $b_{i+1}$, and 
    (2) if $Y_2$ and $Y_3$ are non-empty, then $b_{j-1}$ is completely to the left of $x_{j}$.
    Thus,  suppose that $Y_1$ and $Y_2$ are non-empty, so $x_{i}$ and $b_{i+1}$ exist.
    By the assumptions of the lemma $x_{i} =\ex_{i}(\calA, \mathrm{left}) \leq_{\mathrm{right}} b_{i}$ and clearly $b_{i}$ is completely to the left of $b_{i+1}$. Therefore, $x_{i}$ is completely to the left of $b_{i+1}$ as desired. 
    Symmetrically, if $Y_2$ and $Y_3$ are non-empty, then
    $b_{j+1} \leq_{\mathrm{left}} x_{j+1}$ and $b_j$ is completely to the left of $b_{j+1}$. 
    Therefore, $b_{j}$ is completely to the left of $x_{j+1}$ as desired.  

    Since $A'$ is the aligned set of $A$, we conclude that $A'$ is independent in $G$.
    Let $m'$ be the length of $S'$ and $S'=((u'_1,v'_1),\ldots,(u'_{m'},v'_{m'}))$.
    Since $S'$ is a subsequence of $S$, we can fix $\phi(t)$, for each $t\in\set{1,\ldots,m'}$ such that the pair $(u_{\phi(t)},v_{\phi(t)})$ in $S$ corresponds to $(u'_{t},v'_{t})$ in $S'$.
    Let $A_{\phi(t)}=\restr{S}{\phi(t)}(A)$ and $A'_t = \restr{S'}{t}(A')$.
    By construction we have
    \begin{align*}
    \pi_p(A'_{t}) &= \pi_p(X)&&\textrm{if $p\in\set{1,\ldots,i}\cup\set{j,\ldots,\ell}$}\\
    \pi_p(A'_{t}) &= \pi_p(A_{\phi(t)})&&\textrm{if $p\in\set{i+1,\ldots,j-1}$}.
    \end{align*}
    Thus for each $t\in\set{0,\ldots,m'}$ we have that $\restr{S'}{t}(A')$ is the aligned set of $A_{\phi(t)}\in \calA$.
    Therefore $\restr{S'}{t}(A')$ is independent in $G$ which completes the proof that $S'$ is a reconfiguration sequence from $A'$ to $X$.
\end{proof}

\begin{lemma}\label{lem:repsets_alt}
Let $H$ be a non-empty induced subgraph of $G$, 
let $\ell\in\set{1,\ldots,k}$, and 
let $\calC$ be a component of $R_{\ell}(H)$. 
For every $p \in \set{0,\dots,\ell}$, 
the $p$-extreme set of $\calC$ is independent in $H$ and lies in $\calC$.
\end{lemma}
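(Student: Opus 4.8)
The plan is to construct a single independent set $E$ that lies in $\calC$ and whose underlying set of intervals is exactly the $p$-extreme set of $\calC$. This suffices: membership in $\calC$ already forces $E$ to be an independent set of size $\ell$, so the $p$-extreme set, being equal to $E$ as a set, is an independent set of size $\ell$ lying in $\calC$. Throughout write $x_j=\ex_j(\calC,\mathrm{left})$ and $y_j=\ex_j(\calC,\mathrm{right})$; by the definition of $\ex_j$, for every index $j$ there is a \emph{witness} set in $\calC$ whose $j$-th token equals $x_j$ (resp.\ $y_j$). We aim for $E$ with $\pi_j(E)=x_j$ for $j\in\set{1,\dots,p}$ and $\pi_j(E)=y_j$ for $j\in\set{p+1,\dots,\ell}$.

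The only tool will be repeated application of \cref{lem:splicing} inside $H$ (which is again an interval graph). The crucial point is the order in which the tokens are fixed. \cref{lem:splicing} overwrites a prefix of the token tuple by a prefix of the target and a suffix by a suffix of the target, so to avoid undoing earlier work I will fix the left-extreme tokens in the order $p,p-1,\dots,1$ (each step overwriting a prefix) and the right-extreme tokens in the order $p+1,p+2,\dots,\ell$ (each step overwriting a suffix). A generic step goes as follows: given a current set $A\in\calC$, a target position $q$, and a witness $W\in\calC$ with, say, $\pi_q(W)=x_q$, choose a reconfiguration sequence $S$ from $A$ to $W$ whose traversed family $\calA$ stays inside $\calC$ (possible because $\calC$ is a connected component of $R_\ell(H)$). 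Since $\calA\subseteq\calC$ while $W\in\calA$, the relevant extreme value does not move: $\ex_q(\calA,\mathrm{left})=\ex_q(\calC,\mathrm{left})=x_q=\pi_q(W)$, and symmetrically $\ex_q(\calA,\mathrm{right})=y_q=\pi_q(W)$ if $W$ instead realizes $y_q$. Hence \cref{lem:splicing} applies with $(i,j)=(q,\ell+1)$ in the $x$-case and with $(i,j)=(0,q)$ in the $y$-case, producing an independent set $A'$ that again lies in $\calC$ (it reconfigures to $W$), with $\pi_q(A')=\pi_q(W)$ and with every token of $A$ at a non-overwritten position preserved---positions $>q$ in the $x$-case, positions $<q$ in the $y$-case. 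The side conditions of \cref{lem:splicing} are routine to check: $i<j$ always; the ``$i\ge1$'' clause is invoked only when $i=q\ge1$; and the ``$j\le\ell$'' clause only when $j=q\le\ell$.

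Granting this step, Phase~1 installs the left extremes. If $p=0$, take $A^\star$ to be any set of $\calC$. Otherwise, let $C_p\in\calC$ be a witness for $x_p$, and for $r=p,p-1,\dots,2$ apply the step to $C_r$ with target position $r-1$, obtaining $C_{r-1}\in\calC$; since each such step overwrites only positions $\le r-1$, an immediate induction gives $\pi_j(C_r)=x_j$ for all $j\in\set{r,\dots,p}$, so $A^\star:=C_1$ satisfies $\pi_j(A^\star)=x_j$ for $j\in\set{1,\dots,p}$. Phase~2 installs the right extremes: starting from $B_p:=A^\star$, for $s=p+1,p+2,\dots,\ell$ apply the step to $B_{s-1}$ with target position $s$, obtaining $B_s\in\calC$; this overwrites only positions $\ge s$, so the tokens at positions $\le p$ (fixed in Phase~1) and at positions $p+1,\dots,s-1$ (fixed earlier in Phase~2) remain in place, while $\pi_s(B_s)=y_s$. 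Thus $E:=B_\ell\in\calC$ has $\pi_j(E)=x_j$ for $j\le p$ and $\pi_j(E)=y_j$ for $j>p$, so as a set $E$ is exactly the $p$-extreme set of $\calC$.

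I expect the only genuine subtlety to be the ordering observation above: sweeping the left-extreme positions from left to right would accomplish nothing, because a later application of \cref{lem:splicing} would overwrite the prefix already arranged---hence the two sweeps must run inward from the two ends. Once that is in place, the rest is bookkeeping: tracking which positions each invocation of \cref{lem:splicing} overwrites versus preserves, together with the one-line remark that restricting $\calC$ to a subfamily $\calA$ that still contains a witness leaves the value of $\ex_q$ at the witnessed position unchanged.
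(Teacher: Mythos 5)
Your proof is correct and uses the same key tool as the paper, namely iterated application of \cref{lem:splicing}, but orchestrates it somewhat differently. The paper runs a double induction on $i+(\ell+1-j)$, building sets $A_{i,j}\in\calC$ whose outer positions $1,\dots,i$ and $j,\dots,\ell$ already agree with the $p$-extreme set; each inductive step casts the previously built $A_{i-1,j}$ as the lemma's target $X$ and a fresh witness as the lemma's starting set $A$, so the splice preserves the already-installed outer coordinates and imports the one new extreme from the witness. You swap the two roles (previously built set plays the lemma's $A$, fresh witness plays $X$), and therefore must sweep from the middle indices $p$ and $p+1$ outward rather than from the ends inward; your remark that a left-to-right sweep would ``accomplish nothing'' is accurate under your role assignment but is not a fundamental obstruction---the paper does sweep left to right on the left-hand side precisely because it makes the opposite role assignment. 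Both orchestrations are valid, and the observation you spell out, that $\ex_q(\calA,\mathrm{left})=\ex_q(\calC,\mathrm{left})$ once the traversed family $\calA\subseteq\calC$ contains a witness, is the same fact the paper uses silently to justify the hypotheses of \cref{lem:splicing}.
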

\begin{proof}
    Fix $p\in\set{0,\ldots,\ell}$. 
    Let $X = (x_1,\dots,x_\ell)$ be the $p$-extreme set of $\calC$.
    We claim that for every $i\in\set{0,\ldots,p}$ and $j\in\set{p+1,\ldots,\ell+1}$, 
    there is a set $A_{i,j}\in\calC$ such that 
    for all $q \in \set{1,\dots,i} \cup \set{j,\dots,\ell}$, we have $\pi_{q}(A_{i,j}) = x_q$.
    We prove this claim by induction on $m=i+(\ell+1-j)$. 
    For the base case, when $m=0$, so $i=0$ and $j=\ell+1$, we simply choose $A_{0,\ell+1}$ to be any element in $\calC$.


    For the inductive step, consider $m>0$, so $i>0$ or $j<\ell+1$. 
    The two cases are symmetric, so let us consider only the first one. 
    Thus suppose $i>0$ and therefore by the induction hypothesis, we get
    an independent set $A_{i-1,j}\in \calC$ of the form
    \[
    (x_1,\dots,x_{i-1},a_i, \dots, a_{j-1}, x_{j}, \dots, x_{\ell}),
    \]
    where $a_i,\ldots,a_{j-1}$ are some vertices of $H$.
    Let $B\in\calC$ such that $\pi_i(B)= \ex_i(\calC,\text{left}) = x_i$.
    Since $B, A_{i-1,j}\in \calC$, there is a reconfiguration sequence $S$ 
    from $B$ to $A_{i-1,j}$. 
    By \cref{lem:splicing}, we obtain that
    \[
    (x_1,\dots,x_{i-1},x_i,b_{i+1},\ldots,b_{j-1}, x_{j}, \dots, x_{\ell}) \in \calC.
    \]
    This set witnesses the inductive condition for $(i,j)$ and finishes the inductive step.
\end{proof}

When $k=1$, there is a single token in the graph which moves along a path in the interval graph.
This is a rather trivial setting still, we give explicit functions (\cref{alg:pushtoken}) to
have a good base for the general strategy.

    \begin{algorithm}
        \begin{algorithmic}[1]
            \Function{PushTokenLeft}{$H, u$}
                \State \(w := \min_{\leq_{\text{right}}} \set{v \in V(H) \ \colon \ v \text{ is reachable from } u \text{ in } H}\)
                \State \(\text{let} \ (v_0, \dots, v_m) \ \text{be the shortest path from} \ u \ \text{to} \ w \ \text{in} \ H \)
                \State \Return \([w, ((v_0,v_1), \dots, (v_{m-1}, v_m))]\)
            \EndFunction
            \Function{PushTokenRight}{$H, u$}
                \State \(w := \max_{\leq_{\text{left}}} \set{v \in V(H) \ \colon \ v \text{ is reachable from } u \text{ in } H}\)
                \State \(\text{let} \ (v_0, \dots, v_m) \ \text{be the shortest path from} \ u \ \text{to} \ w \ \text{in} \ H \)
                \State \Return \([w, ((v_0,v_m), \dots, (v_{m-1}, v_m))]\)
            \EndFunction
        \end{algorithmic}
        \caption{}
        \label{alg:pushtoken}
    \end{algorithm}
    \begin{proposition}\label{prop:k=1}
        Let $H$ be an interval graph, and $u$ be a vertex in $H$. 
        Then $\textsc{PushTokenLeft}(H,u)$ outputs the $\le_\mathrm{right}$-minimum vertex $w$ in the component of $u$ in~$H$ and a witnessing $u$-$w$ path $(v_0,v_1), \dots, (v_{m-1}, v_m)$ in $H$, where $v_0=u,v_m=w$ and $v_m <_\mathrm{right} \dots <_\mathrm{right} v_2 <_{\mathrm{right}} v_1$ and if $m\geq2$, $v_2<_{\mathrm{right}} v_0$. 

        Symmetrically, $\textsc{PushTokenRight}(H,u)$ outputs the $\le_\mathrm{left}$-maximum vertex $w$ in the component of $u$ in $H$ and a witnessing $u$-$w$ path $(v_0,v_1), \dots, (v_{m-1}, v_m)$ in $H$, where $v_0=u,v_m=w$ and $v_m >_\mathrm{left} \dots >_\mathrm{left} v_2 >_{\mathrm{left}} v_1$ and if $m\geq2$, $v_2>_{\mathrm{left}} v_0$. 

    \end{proposition}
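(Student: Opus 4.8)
The plan is to prove the statement for \textsc{PushTokenLeft}; the statement for \textsc{PushTokenRight} is entirely symmetric (interchange ``left'' and ``right'', and $\le_{\mathrm{left}}$ and $\le_{\mathrm{right}}$, throughout). First I would dispose of the bookkeeping: the set in line~2 is finite and non-empty (it contains $u$), so $w$ is well-defined and, by construction, is the $\le_{\mathrm{right}}$-minimum vertex of the component $C$ of $u$ in $H$; and the returned sequence is exactly the edge list of a shortest $u$--$w$ path $(v_0,\dots,v_m)$ in $H$, so that $m=\dist_H(u,w)$. Two facts will be used repeatedly: (i) a shortest path has no chords, i.e.\ if $v_a$ and $v_b$ are adjacent in $H$ then $|a-b|\le 1$; (ii) writing $l_x,r_x$ for the endpoints of an interval $x$, any two non-adjacent vertices $x,y$ of $H$ satisfy $r_x<l_y$ or $r_y<l_x$ (they are disjoint intervals and endpoints are distinct). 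The cases $m\le1$ carry no content, so I assume $m\ge2$.

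The heart of the proof is the following claim: for every $i\in\set{1,\dots,m-1}$ we have $r_{v_{i+1}}<l_{v_{i-1}}$; equivalently, since $v_{i-1}$ and $v_{i+1}$ are non-adjacent by (i), the interval $v_{i+1}$ lies completely to the left of $v_{i-1}$. I would prove this by contradiction. Assuming $r_{v_{i-1}}<l_{v_{i+1}}$, I would show by induction on $j$ that $l_{v_j}>r_{v_{i-1}}$ for all $j\in\set{i+1,\dots,m}$: the base case $j=i+1$ is the assumption, and for the step, if $l_{v_{j+1}}<r_{v_{i-1}}$, then the inductive hypothesis gives $l_{v_{j+1}}<r_{v_{i-1}}<l_{v_j}$ while adjacency of $v_j$ and $v_{j+1}$ gives $r_{v_{j+1}}>l_{v_j}$, so $v_{j+1}$ and $v_{i-1}$ overlap and are therefore adjacent --- contradicting (i) because $(j+1)-(i-1)\ge3$. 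Taking $j=m$ then yields $r_w=r_{v_m}>l_{v_m}>r_{v_{i-1}}$, which is impossible since $w$ is $\le_{\mathrm{right}}$-minimum in $C$, $v_{i-1}\in C$, and $v_{i-1}\ne w$ (as $i-1<m$).

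Granting the claim, the two monotonicity assertions come out immediately. Taking $i=1$ gives $r_{v_2}<l_{v_0}<r_{v_0}=r_u$, that is $v_2<_{\mathrm{right}}v_0$. For the chain, suppose $r_{v_{i+1}}>r_{v_i}$ for some $i\in\set{1,\dots,m-1}$: either $v_{i+1}$ lies completely to the left of $v_{i-1}$, whence $r_{v_{i+1}}<l_{v_{i-1}}<r_{v_i}$ (the last step using adjacency of $v_i$ and $v_{i-1}$), contradicting $r_{v_{i+1}}>r_{v_i}$; or $v_{i-1}$ lies completely to the left of $v_{i+1}$, contradicting the claim. Hence $r_{v_1}>r_{v_2}>\dots>r_{v_m}$, i.e.\ $v_m<_{\mathrm{right}}\dots<_{\mathrm{right}}v_1$, and since $v_0=u$ and $v_m=w$ this is precisely the asserted output.

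I expect the inductive claim in the second paragraph to be the only real obstacle. Intuitively it says that once a shortest path to the global $\le_{\mathrm{right}}$-minimum is forced to step entirely to the right of $v_{i-1}$, it can never come back across $r_{v_{i-1}}$ --- any such return would produce a chord of length at least $3$ --- and then the terminal vertex $w$ lying even further to the right contradicts the minimality of $r_w$. Once this is in place the monotonicity statements, and the symmetric claims about \textsc{PushTokenRight}, are routine.
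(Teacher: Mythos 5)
Your proof is correct, and it takes essentially the same route as the paper's, which is very terse: the paper notes that no point of the line can belong to three intervals of a shortest path and then asserts (with a figure) that consequently $v_{i+1}$ left-overlaps $v_i$ for every $i$, with containment possible only at the two ends. You supply the argument the paper leaves implicit, starting from the equivalent chord-free property of a shortest path. Your key claim --- that $v_{i+1}$ lies entirely to the left of $v_{i-1}$ for every $i \in \set{1,\dots,m-1}$ --- is a clean, exception-free reformulation of the same monotone structure, and the inductive argument for it (showing $l_{v_j} > r_{v_{i-1}}$ for all $j \ge i+1$ and deriving a contradiction with the $\le_{\mathrm{right}}$-minimality of $w$) is precisely the step the paper's "it is easy to see" elides. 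Once the claim is in hand, the chain $v_m <_{\mathrm{right}} \cdots <_{\mathrm{right}} v_1$ and the bound $v_2 <_{\mathrm{right}} v_0$ follow exactly as you describe, and the \textsc{PushTokenRight} case is obtained by the symmetric substitution you indicate. Nothing is missing; this is a fully worked-out version of the paper's sketch.
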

    \begin{proof}
        We prove the first part of the statement about $\textsc{PushTokenLeft}(H,u)$.
        The second part is symmetric.
        Consider a shortest path $v_0v_1\cdots v_m$ from $u$ to $w$ in $H$. 
        Note that there is no point on the line that belongs to three intervals in the path as otherwise we could make the path shorter. 
        It is easy to see that 
        $v_{i+1}$ left overlaps $v_i$, i.e., $v_{i+1} <_{\mathrm{right}} v_i$ and $v_{i+1}<_{\mathrm{left}} v_i$ for all $i\in\set{0,\ldots, m-1}$ possibly except two cases: 
        $v_m$ may be contained in $v_{m-1}$ and (if $m\geq2$) $v_1$ may contain $v_0$.
        See Figure~\ref{fig:shortest-path} for an illustration of such a shortest path in an interval graph.
    \end{proof}
\begin{figure}[!h]
    \centering
    \includegraphics{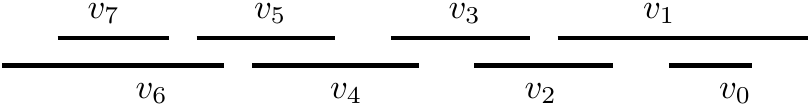}
    \caption{A shortest path from $v_0$ to $v_7$ where $v_7$ is an interval with leftmost right endpoint.}
    \label{fig:shortest-path}
\end{figure}

    For $k=2$, we give an algorithm, see~\cref{alg:pushapart},
     that finds a short reconfiguration from a given independent set $A$ in $H$ to 
     the $1$-extreme set of the component of $A$ in a reconfiguration graph $R_2(H)$.

    \begin{algorithm}
        \begin{algorithmic}[1]
            \Function{PushApart}{$H, A$}
                \State \(a := \pi_1(A) \ b := \pi_2(A)\)
                \State \(S := ()\)
                \Do \label{alg:pushapart:beginloop}
                    \State \([a, S_1]:= \textsc{PushTokenLeft}(H \backslash N[b], \ a)\) \label{alg:pushapart:pushleft}
                    \State \([b, S_2]:= \textsc{PushTokenRight}(H \backslash N[a], \ b)\) \label{alg:pushapart:pushright}
                    \State \(S := \textsc{Concat}(S,S_1,S_2)\)
                \doWhile {\(\textsc{Concat}(S_1,S_2) \neq \emptyset\)}
                \State \Return \((a, b), S\)
            \EndFunction
        \end{algorithmic}
        \caption{}
        \label{alg:pushapart}
    \end{algorithm}

    \begin{proposition}\label{prop:k=2}
        Let $H$ be an interval graph and $A$ be an independent set of size $2$ in $H$.
        Then $\textsc{PushApart}(H,A)$ outputs the $1$-extreme set of 
        the component of $A$ in $R_2(H)$ and a reconfiguration sequence from $A$ to the $1$-extreme set of length at most $2|V(H)|$.
    \end{proposition}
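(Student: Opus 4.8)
The plan is to follow the execution of $\textsc{PushApart}(H,A)$ and establish three facts: (a) at every moment the current pair $(a,b)$ is an independent set and the sequence $S$ accumulated so far is a reconfiguration sequence from $A$ to $(a,b)$; (b) the loop terminates and $S$ has length at most $2|V(H)|$; (c) the pair returned is the $1$-extreme set of the component $\calC$ of $A$ in $R_2(H)$. Claim (a) is the routine one: a $\textsc{PushTokenLeft}$ call operates inside $H\setminus N[b]$ and a $\textsc{PushTokenRight}$ call inside $H\setminus N[a]$, so by \cref{prop:k=1} each individual move runs along an edge of $H$ and never places $a$ on a neighbor of $b$; and since $H\setminus N[v]$ has no edge joining an interval lying completely to the left of $v$ to one lying completely to the right of $v$, an easy induction shows $a$ stays completely to the left of $b$ (each $\textsc{PushTokenLeft}$ keeps $a$ in the left-of-$b$ part, and afterwards the right endpoint of $a$ has only shrunk, so $b$ is still completely to the right of the new $a$, and symmetrically for $\textsc{PushTokenRight}$). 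Concatenating the subsequences returned by the calls then yields a reconfiguration sequence from $A$ to the final $(a,b)$.

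For (b), \cref{prop:k=1} shows that a non-empty $\textsc{PushTokenLeft}$ call strictly decreases the right endpoint of $a$ and a non-empty $\textsc{PushTokenRight}$ call strictly increases the left endpoint of $b$, which forces termination. The length is controlled by bounding the total number of moves of each token separately by $|V(H)|$. For the left token I would argue as follows: concatenating its successive paths gives a single walk, and by \cref{prop:k=1} the only step of each call that increases the token's right endpoint is the single ``anomalous'' step $v_0\to v_1$ in which $v_1$ contains $v_0$; the non-anomalous steps of any one call land on vertices whose right endpoints are squeezed strictly below that of the call's starting vertex and at or above that of its ending vertex, so these ranges are nested downward and disjoint across calls and hence all the non-anomalous targets are distinct and number at most $|V(H)|$; and the anomalous targets cannot reuse a vertex either, because each call outputs the $\le_{\mathrm{right}}$-minimum of its component in the relevant subgraph and no vertex can be strictly contained in such a minimum (it would lie in the same component with a strictly smaller right endpoint), whereas an anomalous target $v_1$ is by definition a superinterval of its call's starting vertex, which is a previous checkpoint. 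Thus the left token makes at most $|V(H)|$ moves and, symmetrically, so does the right token, giving $|S|\le 2|V(H)|$.

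For (c), let $(a,b)$ be the returned pair and $(x_1,x_2)$ the $1$-extreme set of $\calC$, which lies in $\calC$ by \cref{lem:repsets_alt}, so $x_1\le_{\mathrm{right}}a$ and $b\le_{\mathrm{left}}x_2$; moreover, since the loop halted, $a$ is the $\le_{\mathrm{right}}$-minimum of its component in $H\setminus N[b]$ and $b$ is the $\le_{\mathrm{left}}$-maximum of its component in $H\setminus N[a]$. Fix a reconfiguration sequence from $(a,b)$ to $(x_1,x_2)$ and let $\calA$ be the family of sets it traverses; since $\calA\subseteq\calC$ and the endpoint $(x_1,x_2)$ realizes the relevant extrema, we get $\ex_1(\calA,\mathrm{left})=x_1$ and $\ex_2(\calA,\mathrm{right})=x_2$, so \cref{lem:splicing} applies both with $(i,j)=(1,3)$ and with $(i,j)=(0,2)$. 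Together with \cref{rem:splicing} this gives that $b$ is connected to $x_2$ in $H\setminus N[x_1]$ and that $a$ is connected to $x_1$ in $H\setminus N[x_2]$. The plan is now to show that the connection of $a$ to $x_1$ survives in $H\setminus N[b]$; granting this, minimality of $a$ gives $a\le_{\mathrm{right}}x_1$, hence $a=x_1$, and then $H\setminus N[x_1]=H\setminus N[a]$ together with maximality of $b$ and $b\le_{\mathrm{left}}x_2$ forces $b=x_2$, as desired.

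I expect the transfer step at the end of (c) to be the main obstacle: moving from connectivity of $a$ and $x_1$ in $H\setminus N[x_2]$ to connectivity in $H\setminus N[b]$ is nontrivial because the two coordinates are entangled (had we already known $b=x_2$ the step would be immediate). The approach would be to argue by contradiction: if $a$ and $x_1$ lay in distinct components of $H\setminus N[b]$, then they would be disjoint as intervals with $x_1$ completely to the left of $a$ and separated by a point $q$ on the line, strictly between them, that is covered by no interval of $H$ disjoint from $b$; any path from $a$ to $x_1$ inside $H\setminus N[x_2]$ must then contain a vertex covering $q$, and such a vertex is disjoint from $x_2$, so (being forced to intersect $b$) it is a superinterval of $a$ that still lies in $\calC$ when paired with $x_2$ --- and from there one should reach a contradiction with one of the two extremality properties of $(a,b)$. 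Making this contradiction precise, together with a fully rigorous treatment of the anomalous step in (b), is where the real work lies.
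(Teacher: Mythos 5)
Your part (c) — establishing that the returned pair equals the $1$-extreme set — is incomplete, and the ``transfer step'' you flag is indeed a real gap, not a routine detail. You correctly deduce via \cref{lem:splicing,rem:splicing} that $a$ is connected to $x_1$ in $H\setminus N[x_2]$ and $b$ to $x_2$ in $H\setminus N[x_1]$, but the termination condition of $\textsc{PushApart}$ only tells you $a$ is $\le_{\mathrm{right}}$-minimal in its component of $H\setminus N[b]$, not of $H\setminus N[x_2]$, and since $b$ and $x_2$ may differ, these connectivities do not directly combine. Your sketched contradiction (an interval $w\supsetneq a$ covering the separating point $q$, disjoint from $x_2$ but intersecting $b$) produces a set $(w,x_2)\in\calC$, but $w$ lies in neither $H\setminus N[a]$ nor $H\setminus N[b]$, so it does not contradict either extremality property of $(a,b)$ as they stand; I do not see how to close the loop from there. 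The paper avoids the chicken-and-egg problem entirely by a different argument: it takes a path $(a_0,b_0),\dots,(a_m,b_m)$ in $R_2(H)$ from the output to the extreme set, picks the \emph{first} index $i$ at which $a_i<_{\mathrm{right}}a^\star$ or $b^\star<_{\mathrm{left}}b_i$, and observes (say in the first case) that $a_i<_{\mathrm{right}}a^\star\le_{\mathrm{right}}a_{i-1}$ forces $a_i,a^\star,a_{i-1}$ into a common component of $H\setminus N[b_i]$, hence (since $b_i\le_{\mathrm{left}}b^\star$) of $H\setminus N[b^\star]$, contradicting minimality of $a^\star$ there. No use of \cref{lem:splicing} is needed.

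Your part (b) is also looser than you present it. The argument that non-anomalous targets are distinct is fine, but the claim that ``anomalous targets cannot reuse a vertex'' only rules out an anomalous target coinciding with a \emph{checkpoint}; it does not rule out an anomalous target coinciding with a non-anomalous target of an earlier call (their right-endpoint ranges are not disjoint), nor two anomalous targets coinciding. This is a smaller issue — the paper's own accounting for the exceptional moves is brief — but as written your per-token bound of $|V(H)|$ is not established. I would recommend adopting the paper's cleaner bookkeeping: at most $|V(H)|$ non-exceptional moves total plus at most one exceptional move per call, with each interval the target of at most one exceptional move.
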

    \begin{proof}
        Suppose that $\textsc{PushApart}(H,A)$ outputs $(a^{\star},b^{\star})$ and let 
        $(x_1,x_2)$ be the $1$-extreme set 
        of the component $\calC$ of $A$ in $R_2(H)$.
        Suppose to the contrary that $(a^{\star},b^{\star}) \neq (x_1,x_2)$
        Thus, 
        either $x_1 = \ex_1(\calC,\mathrm{left}) <_{\mathrm{right}} a^{\star}$ or
        $b^{\star} <_{\mathrm{left}} \ex_2(\calC,\mathrm{right}) =x_2$.

        Consider a path $((a_0,b_0),\ldots,(a_m,b_m))$ from $(a^{\star},b^{\star})$ to $(x_1,x_2)$ in $R_2(G)$.
        Let $i$ be the smallest index such that $a_i <_{\mathrm{right}} a_0 = a^{\star}$ or $b^{\star} <_{\mathrm{left}} b_i$. 
        This index is well-defined as $i=m$ satisfies the condition and obviously $i>0$.
        Now suppose that $a_i <_{\mathrm{right}} a^{\star}$. 
        The proof of the other case goes symmetrically.
        By the minimality of $i$ we have 
        \[
        a_i <_{\mathrm{right}} a^{\star} \leq_{\mathrm{right}} a_{i-1}\qquad\text{and}\qquad
        b_i \leq_{\mathrm{left}} b^{\star}.
        \]
        The inequalities on the right endpoints of $a_i$, $a^{\star}$, and $a_{i-1}$ imply that these three intervals form a connected subgraph of $H$. 
        Since $a_{i-1}$ and $a_i$ are in the same component of $H\setminus N[b_i]$, 
        we conclude that $a^{\star}$ is also in that component.
        Finally, since $b_i \leq_{\mathrm{left}} b^{\star}$ 
        we conclude that $a_{i-1}$, $a^{\star}$, $a_i$ are together in the same component of $H \setminus N[b^{\star}]$. 
        See Figure~\ref{fig:PushApart} that illustrates all these inequalities.
\begin{figure}[!h]
    \centering
    \includegraphics{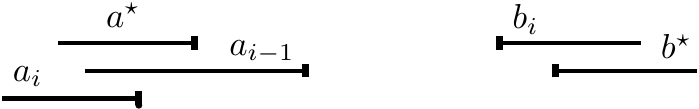}
    \caption{$a_i <_{\mathrm{right}} a^{\star} \leq_{\mathrm{right}} a_{i-1}$ and 
    $b_i \leq_{\mathrm{left}} b^{\star}$. The intervals $a_i$, $a^{\star}$, $a_{i-1}$ are together in a component of $H\setminus N[b_i]$ and so they are together in a component of $H\setminus N[b^{\star}]$ as well.}
    \label{fig:PushApart}
\end{figure}
        Consider the last iteration of the loop in $\textsc{PushApart}(H,A)$. 
        The variables $a$ and $b$ keep the values $a^{\star}$ and $b^{\star}$ in this iteration. 
        In particular $\textsc{PushTokenLeft}(H\setminus N[b^{\star}],a^{\star})$ did not change the value of $a$. 
        This is a contradiction as by~\cref{prop:k=1} $\textsc{PushTokenLeft}(H\setminus N[b^{\star}],a^{\star})$ outputs the $\leq_{\mathrm{right}}$ minimum vertex in the component of $a^{\star}$ in $H\setminus N[b^{\star}]$ but we already know that $a^{\star}$ is not $\leq_{\mathrm{right}}$-minimum there.

        Now, let us prove the claim about the length of the output reconfiguration sequence $S$. Let $(a_0,b_0), \dots, (a_m,b_m)$ be the path in $R_2(H)$ corresponding to $S$, 
        that is $(a_t,b_t) = S|_t(A)$ for $t\in\set{0,\dots,m}$. 
        Suppose that $(a,b) = (a^0,b^0)$ at the beginning of the loop in line \ref{alg:pushapart:beginloop} and $(a,b) = (a^1,b^1)$ after pushing left and right in lines \ref{alg:pushapart:pushleft}-\ref{alg:pushapart:pushright}. 
        By~\cref{prop:k=1}, we know that $a^1 \le_{\text{right}} a^0$ and $b^0 \le_{\text{left}} b^1$ and every intermediate configuration $(a',b')$ on the path satisfies $a^1 \le_{\text{right}} a' \le_{\text{right}} a^0$ and $b^0 \le_{\text{left}} b' \le_{\text{left}} b^1$ with a possible exception of the first move from $a^0$ and the first move from $b^0$.
        This way we see that the total number of steps without the exceptional moves is at most $|V(H)|$ and each interval in $H$ can be the target of at most one exceptional move. This gives a bound $2|V(H)|$, as desired.
    \end{proof}

    We present now our main reconfiguration algorithm, see~\cref{alg:solve}.

        \begin{algorithm}
            \begin{algorithmic}[1]
                \Function{Reconfigure}{$G, k, A$}
                    \For { \(i \in \set{1,2,\dots,k}\) }
                        \State \(a_i := \pi_i[A]\)
                        \State \(\text{lext}_i := a_i\) \Comment{$\text{lext}_k$ is unused}\label{alg:solve:init1}
                        \State \(\text{rext}_i := a_i\) \Comment{$\text{rext}_1$ is unused}\label{alg:solve:init2}
                    \EndFor
                    \State \(j := 1;\ S=()\)
                    \While{ \(j < k\) } \label{alg:solve:outerloop}
                        \State \([(a_j,a_{j+1}),S'] := \) \Call{PushApart}{$G \setminus \bigcup \limits_{i \neq j,j+1} N[a_i], \set{a_j, a_{j + 1}}$}\label{alg:solve:PushApart}
                        \vspace{-1.0ex}
                        \State \(S.\textsc{Append}(S')\)
                        \State \(\gamma=1\)
                        \If { \((a_j, a_{j+1}) \neq (\text{lext}_j, \text{rext}_{j + 1})\) }
                            \State \(\text{lext}_j := a_j\) \label{alg:solve:update-lext}
                            \State \(\text{rext}_{j + 1} := a_{j+1}\) \label{alg:solve:update-rext}
                            \IIf {$j>1$} \(\gamma=-1\)\EndIIf
                        \EndIf
                        \State \(j:=j+\gamma\)\label{alg:solve:j-update}
                    \EndWhile
                    \State \Return $(a_1,\ldots,a_k),\, S$
                \EndFunction
            \end{algorithmic}
            \caption{}
            \label{alg:solve}
        \end{algorithm}

    \begin{lemma}
    Let $k\geq2$ and let $A$ be an independent set of size $k$ in $G$.
    Then $\textsc{Reconfigure}(G, k, A)$ outputs the $(k-1)$-extreme set of the component 
    of $A$ in $R_{k}(G)$ and a reconfiguration sequence from $A$ to this set of length 
    $\Oh(k\cdot n^2)$.
    \end{lemma}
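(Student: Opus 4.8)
The plan is to establish three things: (a) the sequence $S$ that is returned is a legal reconfiguration sequence from $A$ that never leaves the component $\calC$ of $A$ in $R_k(G)$; (b) when the loop halts the configuration $(a_1,\dots,a_k)$ equals the $(k-1)$-extreme set $X^{(k-1)}$ of $\calC$; (c) $|S|\in\Oh(k\cdot n^2)$. Throughout write $x_i=\ex_i(\calC,\mathrm{left})$, $y_i=\ex_i(\calC,\mathrm{right})$, and for $p\in\set{0,\dots,k}$ let $X^{(p)}$ denote the $p$-extreme set of $\calC$; by \cref{lem:repsets_alt} every $X^{(p)}$ lies in $\calC$, and $X^{(k-1)}=(x_1,\dots,x_{k-1},y_k)$. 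Part (a) is routine: each iteration of the \texttt{while} loop runs \textsc{PushApart} on a two-token instance in $G$ with the closed neighbourhoods of the other $k-2$ tokens deleted, which by \cref{prop:k=2} returns a genuine reconfiguration sequence for that instance; reinstating the deleted (hence non-conflicting) tokens turns it into a reconfiguration sequence in $G$, so by induction on the iterations the configuration stays in $\calC$ and $S$ is legal.

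Call a consecutive pair $(a_i,a_{i+1})$ \emph{settled} if it equals the $1$-extreme set of the component of $\set{a_i,a_{i+1}}$ in $R_2(G\setminus\bigcup_{p\neq i,i+1}N[a_p])$, that is, if another call of \textsc{PushApart} on it would do nothing. The first key claim is a loop invariant: whenever control is at line~\ref{alg:solve:outerloop} with counter $j$, all pairs $(a_i,a_{i+1})$ with $i<j$ are settled. This is proved by induction over the loop: a forward step ($\gamma=1$) occurs only after the pair just processed did not change (or when $j=1$), so pair $j$ becomes settled while the pairs to its left were not touched; a backward step ($\gamma=-1$) is caused by a change at pair $j>1$, whose processing modified only $a_j$ and $a_{j+1}$, and these do not occur in any pair indexed below $j-1$. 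In particular, when the loop exits (so $j=k$) all of $(a_1,a_2),\dots,(a_{k-1},a_k)$ are settled.

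For (c) we track the potential $\Phi=\sum_{i<k}d_{\mathrm{right}}(x_i,\mathrm{lext}_i)+\sum_{i>1}d_{\mathrm{left}}(\mathrm{rext}_i,y_i)$, where $d_{\mathrm{right}}(u,v)$ counts the intervals $w$ with $u\le_{\mathrm{right}}w<_{\mathrm{right}}v$ and $d_{\mathrm{left}}$ is the mirror quantity; since $\mathrm{lext}_i,\mathrm{rext}_i$ are always $i$-th coordinates of actual members of $\calC$, all terms are nonnegative and $\Phi\le 2(k-1)n$ at the start. The second key claim is monotonicity: each time a loop iteration resets $\mathrm{lext}_j$ the new value is $\le_{\mathrm{right}}$ the old one, and each time it resets $\mathrm{rext}_{j+1}$ the new value is $\ge_{\mathrm{left}}$ the old one. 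Here the interval geometry enters via \cref{prop:k=1}: between two successive resets of $\mathrm{lext}_j$ only tokens of smaller index could have moved, and pushing such a token to the left lowers its right endpoint, which only shrinks the part of its closed neighbourhood lying to the right of it and hence only enlarges the set of positions the $j$-th token can slide into on the left; symmetrically for $\mathrm{rext}_{j+1}$. An iteration counts as a change exactly when it alters $\mathrm{lext}_j$ or $\mathrm{rext}_{j+1}$, so by monotonicity each change strictly decreases $\Phi$, whence there are $\Oh(kn)$ changes; every non-change iteration increments $j$ and $j$ is decremented only at changes, so there are $\Oh(kn)$ iterations in all, each contributing at most $2n$ moves to $S$ by \cref{prop:k=2}. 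This gives $|S|\in\Oh(k\cdot n^2)$ and in particular termination.

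It remains to upgrade ``all pairs settled'' to ``the configuration is $X^{(k-1)}$''. We argue by induction on $j$ that $a_j=x_j$ for $j<k$, which then forces $a_k=y_k$ since the configuration lies in $\calC$. Suppose $a_i=x_i$ for $i<j$ but $a_j\neq x_j$; as the configuration is in $\calC$ and $x_j=\ex_j(\calC,\mathrm{left})$ we must have $x_j<_{\mathrm{right}}a_j$. Take a reconfiguration sequence from the current configuration to $X^{(j)}=(x_1,\dots,x_j,y_{j+1},\dots,y_k)\in\calC$; every configuration along it lies in $\calC$, so its $(j-1)$-st coordinate is $\ge_{\mathrm{right}}x_{j-1}$ and its $(j+1)$-st is $\le_{\mathrm{left}}y_{j+1}$, while both bounds are attained at the endpoint $X^{(j)}$. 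Thus \cref{lem:splicing} applies with its parameter ``$i$'' equal to $j-1$ and its parameter ``$j$'' equal to $j+1$, and by \cref{rem:splicing} the extracted subsequence moves the single token $a_j$ to $x_j$ inside $G\setminus\bigl(\bigcup_{p<j}N[x_p]\cup\bigcup_{p>j}N[y_p]\bigr)$. Finally one invokes the one-sidedness of sliding moves on intervals --- a token moving leftward, i.e.\ decreasing its right endpoint, is insensitive to everything sufficiently far to its right, since the closed neighbourhoods of the tokens of index $\ge j+1$, and of the intervals $y_{j+1},\dots,y_k$, consist only of intervals whose left endpoint exceeds $r(a_j)$ --- to transfer this sliding into $G\setminus\bigcup_{p\neq j,j+1}N[a_p]$; that contradicts pair $j$ being settled. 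Hence $a_j=x_j$ for all $j<k$, the mirror argument gives $a_k=y_k$, and the output is $X^{(k-1)}$, as required. I expect this last step --- passing from the purely local statement ``every consecutive pair is settled'' to the global one --- to be the main obstacle: the restricted graphs occurring in the algorithm (of the form $G\setminus\bigcup_{p\neq j,j+1}N[a_p]$) are not literally those produced by \cref{lem:splicing} (which involve the extremal positions $x_p,y_p$ instead of the current $a_p$), so one must lean on the interval structure to reconcile them, and in particular make sure the sliding of the $j$-th token never needs an interval deleted in the algorithm's graph; by comparison the monotonicity claim of paragraph~(c), while not quite free, is a more mechanical piece of interval bookkeeping.
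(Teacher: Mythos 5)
Your proposal diverges substantially from the paper's proof and contains gaps that I do not think are reparable along the lines you sketch. The paper's Claim~\ref{cl:correctness} proves, by induction on the position $\alpha$, that whenever the loop advances with $\gamma=1$ the prefix $(\ell_1,\dots,\ell_\alpha,r_{\alpha+1})$ is the $\alpha$-extreme set of the appropriate component in the \emph{restricted} reconfiguration graph $R_{\alpha+1}\bigl(G\setminus\bigcup_{i>\alpha+1}N[r_i]\bigr)$; the restriction by $r_i$ (not by the current $a_i$) is what makes Lemma~\ref{lem:splicing} applicable at each stage. You instead try to prove that ``all pairs to the left of $j$ are settled'' and then upgrade that local condition to globality, and both halves have problems.

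First, the loop invariant is not justified. Your argument that a backward step preserves it is: ``processing modified only $a_j$ and $a_{j+1}$, and these do not occur in any pair indexed below $j-1$.'' But being \emph{settled} for pair $i$ depends not only on $(a_i,a_{i+1})$ but on the graph $G\setminus\bigcup_{p\neq i,i+1}N[a_p]$, and $N[a_j]$ is removed in that graph for every $i\le j-2$. When $a_j$ is pushed left its right endpoint drops (\cref{prop:k=1}), but its left endpoint may \emph{increase} (the output of \textsc{PushTokenLeft} may be an interval contained in $a_j$), in which case $N[a_j]$ shrinks on both sides. Vertices in $N[a_j]\setminus N[a_j']$ lying to the left of $a_j'$ then enter pair $(j-2)$'s graph and may let $a_{j-1}$ slide further right, so pair $j-2$ can become unsettled even though its two tokens did not move. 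You need an argument to rule this out (or, as the paper does, a correctness invariant that never asserts settledness of pairs that are no longer being looked at), and the mere bookkeeping of indices does not supply one.

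Second, the monotonicity claim driving your potential argument is asserted but not proved, and the proof you gesture at is wrong. You write that between two successive resets of $\text{lext}_j$ ``only tokens of smaller index could have moved.'' That is false: token $j$ itself moves (as the right token of pair $j-1$, it is pushed right), and token $j+1$ moves as well; the algorithm can bounce through many indices in between. The paper's proof of this monotonicity (\cref{inv:gapsexpand}) is not ``mechanical interval bookkeeping'' --- it is an induction that invokes \cref{lem:splicing} together with the already-established monotonicity of $\text{lext}_{j-1}$ and $\text{rext}_{j+2}$ to place $(\ell'_j,r'_{j+1})$ and $(\ell_j,r_{j+1})$ in the same component of $R_2(G\setminus\bigcup_{i\neq j,j+1}N[a_i])$ before applying \cref{prop:k=2}. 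This splicing step is essential and is missing from your argument.

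Third, in the ``upgrade'' step the claim that ``the closed neighbourhoods of the tokens of index $\ge j+1$, and of the intervals $y_{j+1},\dots,y_k$, consist only of intervals whose left endpoint exceeds $r(a_j)$'' is false: an interval in $N[a_{j+1}]$ can start well to the left of $a_j$ and reach into $a_{j+1}$. So the one-sidedness transfer from the splicing graph $G\setminus\bigl(\bigcup_{p<j}N[x_p]\cup\bigcup_{p>j}N[y_p]\bigr)$ to the algorithm's graph $G\setminus\bigcup_{p\neq j,j+1}N[a_p]$ does not go through as stated. The paper avoids this by always stating and proving its invariant relative to the graphs $G\setminus\bigcup_{i>\alpha+1}N[r_i]$, which exactly match the neighbourhoods used by the splicing lemma at the right moment, so no such transfer is needed.

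In short, the two genuine obstacles you correctly identify --- upgrading a local invariant to a global one, and proving the monotonicity --- are both resolved in the paper by repeated, careful applications of the splicing lemma inside restricted reconfiguration graphs, and your proposal does not carry out either argument; the steps you offer in their place contain concrete errors.
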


    \begin{proof}

        We begin our consideration of \cref{alg:solve} by noting two invariants.

        \begin{claim}\label{inv:syntactic}
            Every time~\cref{alg:solve} reaches line~\ref{alg:solve:j-update}, 
            we have
            \[
            (a_1,\ldots,a_k) = (\text{lext}_1,\ldots,\text{lext}_j,\text{rext}_{j+1},\ldots,\text{rext}_k).
            \]
        \end{claim}
        \begin{proof}
            Note that the equation holds after the initialization in lines~\ref{alg:solve:init1}-\ref{alg:solve:init2}. 
            Later on, the values of $(a_1,\ldots,a_k)$ are updated only in line~\ref{alg:solve:PushApart} and if so then corresponding values of $\text{lext}$ and $\text{rext}$ are updated in lines~\ref{alg:solve:update-lext}-\ref{alg:solve:update-rext}.
        \end{proof}

        \begin{claim}\label{inv:gapsexpand}
            For every $j\in\set{2,\ldots,k}$, 
            the values held by $\text{lext}_j$ are nonincreasing with respect to \(\leq_{\mathrm{right}}\).
            Symmetrically, for every $j\in\set{1,\ldots,k-1}$, 
            the values held by $\text{rext}_j$ are nondecreasing with respect to \(\leq_{\mathrm{left}}\).
        \end{claim}
        \begin{proof}
            We prove the statement by induction over the iterations steps in~\cref{alg:solve}.
            Consider the moment when the values of $\text{lext}_j$ and $\text{rext}_{j+1}$ are updated in lines~\ref{alg:solve:update-lext}-\ref{alg:solve:update-rext}.
            Let \((\ell_1, \ell_2, \dots, \ell_k)\) and \((r_1, r_2, \dots, r_k)\) be the values held by vectors \(\text{lext}\) and \(\text{rext}\) just after this update.
            Let \((\ell'_1, \dots, \ell'_k)\) and \((r'_1, \dots, r'_k)\) be the values held by vectors \(\text{lext}\) and \(\text{rext}\) just after the previous update of \(\text{lext}_j\) or \(\text{rext}_{j + 1}\) in lines~\ref{alg:solve:update-lext}-\ref{alg:solve:update-rext} or the initial values of these vectors (assigned in lines~\ref{alg:solve:init1}-\ref{alg:solve:init2}) if there was no previous update.
            All we need to show is that \(\ell_j \leq_{\text{right}} \ell'_j\) and \(r'_{j + 1} \leq_{\text{left}} r_{j + 1}\).

            By~\cref{inv:syntactic} we know that the algorithm had tokens in \((\ell'_1, \dots, \ell'_j, r'_{j + 1} \dots, r'_k)\) and after applying some reconfiguration sequence, say $S=((u_1,v_1),\ldots,(u_m,v_m))$, it reached the configuration \((\ell_1, \dots, \ell_j, r_{j + 1}, \dots, r_k)\). 
            Let $\calA=\set{\restr{S}{t}(A):\ t\in\set{0,\ldots,m}}$. 

            By induction hypothesis the statement holds for all the updates applied so far by~\cref{alg:solve}. In particular, $\ell_{j-1}$ is the $\leq_{\text{right}}$-minimal position of the $(j-1)$-th token so far and $r_{j+2}$ it the $\leq_{\text{left}}$-maximal position of the $(j+2)$-th token so far (assuming that these tokens exist). Thus,
            \begin{align*}
            \ell_{j-1} &= \ex_{j-1}(\calA, \text{left})&&\text{if $j-1\geq1$,}\\
            r_{j+2} &= \ex_{j+2}(\calA, \text{right})&&\text{if $j+2\leq k$.}
            \end{align*}
            Therefore, we may apply~\cref{lem:splicing,rem:splicing} and conclude that 
            $(\ell'_j,r'_{j+1})$ and \((\ell_j, r_{j + 1})\) are in the same component 
            of $R_2(G \setminus \bigcup \limits_{i \neq j,j+1} N[a_i])$.
            By~\cref{prop:k=2}, the execution of 
            $\textsc{PushApart}$ in line~\ref{alg:solve:PushApart} outputs the $1$-extreme set, namely $(\ell_j,r_{j+1})$, of the component of $(\ell'_j,r'_{j+1})$ in $R_2(G \setminus \bigcup \limits_{i \neq j,j+1} N[a_i])$. Therefore, $\ell_j \leq_{\mathrm{right}}\ell'_j$ and $r_{j+1} \geq_{\mathrm{left}} r'_{j+1}$, as desired.
        \end{proof}

        \begin{claim}\label{cl:correctness}
            Consider a moment when~\cref{alg:solve} reaches the line~\ref{alg:solve:j-update} and \(\gamma = 1\). Let $\alpha$ be the value of variable $j$ prior to the update.
            Let $(\ell_1,\ldots,\ell_k)$, $(r_1,\ldots,r_k)$ be the values held at this moment by vectors $\text{lext}$ and $\text{rext}$, respectively.
            Then,
            \begin{align*}
                \ell_i &= \ex_i \paren*{\calC, \text{left} }&&\text{for $i\in\set{1,\ldots,\alpha}$},\\
                 r_{\alpha + 1} &= \ex_{\alpha + 1 } \paren*{\calC, \text{right}},
            \end{align*}
            where \(\calC\) is the component of \((\ell_1, \ldots, \ell_\alpha, r_{\alpha + 1})\) in \(R_{\alpha + 1}(G \setminus \bigcup\limits_{i > \alpha + 1} N[r_i])\).
            In particular, $(\ell_1,\ldots,\ell_{\alpha},r_{\alpha+1})$ is the $\alpha$-extreme set in $\calC$.
        \end{claim}
        \begin{proof}
            We proceed by induction on \(\alpha\). 
            First we deal with \(\alpha = 1\). 
            When~\cref{alg:solve} starts an iteration of the while loop with $j=1$, 
            then by~\cref{inv:syntactic} (and initialization in lines~\ref{alg:solve:init1}-\ref{alg:solve:init2}), 
            we have $(a_3,\ldots,a_k)=(r_3,\ldots,r_k)$. 
            By~\cref{prop:k=2}, $\textsc{PushApart}(G\setminus\bigcup\limits_{i > 2} N[r_i],\set{a_1,a_2})$ 
            executed in line~\ref{alg:solve:PushApart} outputs the $1$-extreme set of the component of $\set{a_1,a_2}$ in $R_2(G\setminus\bigcup\limits_{i > 2} N[r_i])$.
            After the update in lines~\ref{alg:solve:update-lext}-\ref{alg:solve:update-rext}, this set is stored in $\set{\text{lext}_1,\text{rext}_2}=\set{\ell_1,r_2}$ when~\cref{alg:solve} reaches the line~\ref{alg:solve:j-update}, as desired.
            
            Let us assume that \(\alpha > 1\) and that the claim holds for all smaller values of $\alpha$.
            Consider an iteration of the while loop with $j=\alpha$ such that~\cref{alg:solve} reaches line~\ref{alg:solve:j-update} with $\gamma=1$.
            Let $(\ell_1,\ldots,\ell_k)$, $(r_1,\ldots,r_k)$ be the values held at this moment by vectors $\text{lext}$ and $\text{rext}$, respectively.
            For convenience, we call this iteration the \emph{present} iteration.
            
            Now starting from the present iteration consider the last iteration before with $j=\alpha-1$.
            We call this iteration the \emph{past} iteration.
            Clearly, the past iteration had to conclude with $\gamma=1$ and all iterations between the past and the present (there could be none) must have the value of variable $j\geq\alpha$ and those with $j=\alpha$ must conclude with $\gamma=1$.
            This implies that the values of \((\text{lext}_1, \dots, \text{lext}_{\alpha})\) and \((\text{rext}_2, \dots, \text{rext}_{\alpha + 1})\) did not change between the past and the present iterations so they constantly are $(\ell_1,\ldots,\ell_{\alpha})$ and $(r_2,\ldots,r_{\alpha+1})$.

            Let \(\calD\) be the connected component of \((\ell_1, \ell_2, \dots, \ell_{\alpha - 1}, r_\alpha)\) in \(R_\alpha (G \backslash N[r_{\alpha + 1}])\). The inductive assumption for the past iteration yields:
            \begin{align}
                \ell_i &= \ex_i \paren*{\calD, \text{left}} &&\text{for $i\in\set{1,\ldots,\alpha-1}$, and}\tag{$\star$}\label{eq:ind-ell}\\
                r_{\alpha} &= \ex_{\alpha} \paren*{\calD, \text{right}},\notag
            \end{align}

            Note that in the iteration immediately following the previous iteration (this was an iteration with $\alpha=j$ and may be the present iteration) the only token movement was the travel of the $j$-th token from $r_{\alpha}$ to $\ell_{\alpha}$ (the $(j+1)$-th token stays at $r_{\alpha+1}$).
            In particular, there is a path from $r_{\alpha}$ to $\ell_{\alpha}$ in \(G \backslash (N[\ell_{\alpha - 1}] \cup N[r_{\alpha + 1}])\). 
            Therefore, there is a path connecting \((\ell_1, \dots, \ell_{\alpha - 1}, r_\alpha)\) and \((\ell_1, \dots, \ell_\alpha)\) in $R_\alpha (G \backslash N[r_{\alpha + 1}])$, so both independent sets are in \(\calD\).
         
            Now we argue, that
            \[
            \ell_\alpha = \ex_\alpha \paren*{ \calD, \text{left} }.
            \]
            Indeed, take any \( (v_1, v_2, \dots, v_\alpha) \in \calD\) and we aim to show that \(\ell_\alpha \leq_{\text{right}} v_\alpha\). 
            As \((v_1, \dots, v_\alpha)\) and \((\ell_1, \dots, \ell_\alpha)\) are in one component of the reconfiguration graph $R_\alpha (G \backslash N[r_{\alpha + 1}])$ and by~\eqref{eq:ind-ell}, we may apply \cref{lem:splicing} to conclude that \((\ell_1, \ell_2, \dots, \ell_{\alpha - 1}, v_\alpha)\) also lies in \(\calD\). 
            Moreover by \cref{rem:splicing}, the vertices \(\ell_\alpha\) and \(v_\alpha\) are connected by a path in \(G \backslash \paren*{ N[\ell_{\alpha - 1}] \cup N[r_{\alpha + 1}] }\).
            Thus by \cref{prop:k=2}, $\textsc{PushApart}$ executed in the present iteration guarantees that \(\ell_\alpha \leq_{\text{right}} v_\alpha\) as claimed.

            Let $\calC$ be the component of \((\ell_1, \ldots, \ell_\alpha, r_{\alpha + 1})\) in \(R_{\alpha + 1}(G \setminus \bigcup\limits_{i > \alpha + 1} N[r_i])\).
            We proceed to argue that
            \[
            r_{\alpha + 1} = \ex_{\alpha + 1} \paren*{\calC, \text{right}}.
            \]
            Assume for the sake of contradiction that there is some \(A \in \calC\), such that \( r_{\alpha + 1} <_{\text{left}} \pi_{\alpha + 1}(A) \) and 
            fix such an $A$ with a shortest possible reconfiguration sequence 
            \(S = ((u_1, v_1), \dots, (u_m, v_m))\) from \((\ell_1, \dots, \ell_\alpha, r_{\alpha + 1})\) to \(A\) in $R_{\alpha + 1}(G \setminus \bigcup\limits_{i > \alpha + 1} N[r_i])$.
            Let $A_t=\restr{S}{t}((\ell_1, \dots, \ell_\alpha, r_{\alpha + 1}))$, for all $t\in\set{0,\ldots,m}$.
            By the choice of $A$ and $S$, for every $t\in\set{0,\ldots,m-1}$ we have \(\pi_{\alpha + 1}(A_t) \leq_{\text{left}} r_{\alpha + 1}\). 
            We apply now \cref{lem:splicing} (with $i=0$, $j=\alpha+1$) to a path from \(A_0=(\ell_1, \dots, \ell_\alpha, r_{\alpha + 1})\) to \(A_{m - 1}\)
            and conclude that for each \(t\in\set{0,\ldots,m-1}\) the set $A'_t =(\pi_1(A_t),\ldots,\pi_{\alpha}(A_t),r_{\alpha+1})$ is an independent set in $\calC$.
            Consider now a path of independent sets of size $\alpha$ formed by dropping the $(\alpha+1)$-th coordinate of each set in the path  $(A'_0,\ldots,A'_{m-1},A_m)$.
            Since $(\pi_1(A_0),\ldots,\pi_{\alpha}(A_0))=(\ell_1,\ldots,\ell_{\alpha})\in\calD$, the  whole path lives in \(\calD\).
            Therefore, by~\eqref{eq:ind-ell} we have
            \[
            \ell_i \leq_\text{right} \pi_i (A_t),
            \]
            for all \(t\in\set{0,\ldots,m}\) and \(i\in\set{1,\ldots,\alpha}\).
            But this in turn allows us to apply \cref{lem:splicing,rem:splicing} once more (this time with $i=\alpha$, $j=\alpha+2$) to a path from  \((\ell_1, \dots, \ell_\alpha, r_{\alpha + 1})\) to \(A_{m}\) and we conclude that 
            \(r_{\alpha + 1}\) and \(\pi_{\alpha + 1}(A_{m})\) are in the same component of 
            \(G \backslash \paren*{N[\ell_\alpha] \cup \bigcup\limits_{i > \alpha + 1} N[r_i])}\). 
            But $\textsc{PushApart}$ executed in the present iteration outputs $(\ell_{\alpha},r_{\alpha+1})$ while \(r_{\alpha + 1} <_\text{left} \pi_{\alpha + 1} (A_{m})\).
            This contradicts \cref{prop:k=2} and completes the proof that 
            $r_{\alpha + 1} = \ex_{\alpha + 1} \paren*{\calC, \text{right}}$.

            It remains to prove that $\ell_i=\ex_i\paren*{\calC, \text{left}}$ for all $i\in\set{1,\ldots,\alpha}$.
            Pick an arbitrary \(A=(v_1, v_2, \dots, v_{\alpha + 1}) \in \calC\).
            Since we already know that 
            \(r_{\alpha + 1} = \ex_{\alpha + 1}(\calC, \text{right})\), 
            we can apply \cref{lem:splicing} (with $i=0$, $j=\alpha+1$) to a path from \((\ell_1, \dots, \ell_\alpha, r_{\alpha + 1})\) to \((v_1, v_2, \dots, v_{\alpha + 1})\), and we conclude that there is a reconfiguration sequence transforming \((v_1, \dots, v_\alpha)\) into \((\ell_1, \dots, \ell_\alpha)\) in \(G \setminus N[r_{\alpha+1}])\). 
            Thus, this path lies in $\calD$ and the desired inequalities $\ell_i \leq_{\text{right}} v_i$ for all $i\in\set{1,\ldots,\alpha}$ follow by~\eqref{eq:ind-ell}.
        \end{proof}

        Clearly, Claims~\ref{cl:correctness} and~\ref{inv:syntactic} establish the correctness of the algorithm. 
        Equipped with the invariant given by \cref{inv:gapsexpand}, we can bound the length of the returned reconfiguration sequence. Indeed, observe that in each iteration of the while loop in line~\ref{alg:solve:outerloop} either \(\text{lext}_j\) decreases wrt. \(\leq_\text{right}\), or \(\text{rext}_{j + 1}\) increases wrt. \(\leq_\text{left}\) while \(j\) drops by \(1\), or \(j\) increases by \(1\).
        Now this implies that the outer loops can iterate at most \(4nk + k\), as the quantity
        \[
            j + 2 \sum \limits_{i = 1}^{k} \text{Index}_{\leq_{\text{left}}}(\text{rext}_i) + (n - \text{Index}_{\leq_{\text{right}}}(\text{lext}_i) + 1)\text{,}
        \]
        where \(\text{Index}_{\leq}(x)\) denotes the position of element \(x\) in a given linear order \(\leq\) on some fixed finite set, increases by at least one in each iteration and it is at most \(4nk + k\).

        As seen in \cref{prop:k=2}, each call of the procedure \(\textsc{PushApart}\) returns a sequence consisting of at most \(2n\) moves. Therefore, the length of reconfiguration sequence returned by \cref{alg:solve} is at most \(8kn^2 + 2kn \in \mathcal{O}(kn^2)\). 
        This completes the proof of~\cref{thm:upper-bound}.
    \end{proof}

\section{Lower bound: Example}
We present a family of graphs $\set{G_{m,k}}_{m,k\ge 1}$, such that $|V(G_{m,k})| = 8k+2m-5$ and $R_k(G_{m,k})$ contains a component of diameter at least $\frac{k^2}{4} \cdot m$. This will prove \cref{thm:lower-bound}.

Fix integers $m,k \ge 1$. We will describe a family of intervals $\calI_{m,k}$. The graph $G_{m,k}$ will be simply the intersection graph of $\calI_{m,k}$. We construct the family in three steps. We initialize $\mathcal{I}_{m,k}$ with $(k-1) + (m+2k-1) + k$ pairwise disjoint intervals:
$$a_{k-1}, \dots, a_{1}, v_1, \dots, v_{m+2k-1}, b_1, \dots, b_k,$$
listed with their natural left to right order on the line. We call these intervals, the \emph{base} intervals. 
Let $N = m + 2k - 1$. We put into $\calI_{m,k}$ further $N-1$ intervals:
$$v_{1,2}, v_{2,3}, \dots, v_{N-1,N},$$
where for each $i \in \set{1,\dots,N-1}$, the interval $v_{i,i+1}$ is an open interval with the left endpoint in the middle of $v_i$ and the right endpoint in the middle of $v_{i+1}$. We call these intervals the \emph{path} intervals. Finally, we put into $\calI_{m,k}$ two groups of \emph{long} intervals:
$$\ell_1,\dots,\ell_{k-1} \ \text{and} \ r_1,\dots,r_k,$$
where for each $i\in \set{1,\dots,k-1}$ the interval $\ell_i$ is the open interval with the left endpoint coinciding with the left endpoint of $a_i$ and the right endpoint coinciding with the right endpoint of $v_{N-(k-1)-i}$. Symmetrically, for each $i\in \set{1,\dots,k}$ the interval $r_i$ is the open interval with the left endpoint coinciding with the left endpoint of $v_{k-i+1}$ and the right endpoint coinciding with the right endpoint of $b_i$. This completes the construction of $\calI_{m,k}$. See Figure~\ref{fig:G(6,3)-letters}.

\begin{figure}[!h]
    \centering
    \includegraphics{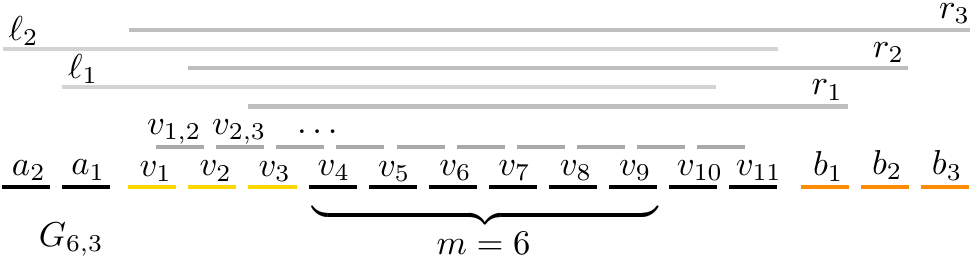}
    \caption{The graph $G_{6,3}$ with two distinguished independent sets $I = \set{v_1,v_2,v_3}$ and $J = \set{b_1,b_2,b_3}$.}
    \label{fig:G(6,3)-letters}
\end{figure}

Consider two independent sets $ I = (v_1,\dots,v_k) \ \text{and} \ J = (b_1,\dots,b_k)$ in $G_{m,k}$.

\begin{lemma}\label{lem:example}
The sets $I$ and $J$ are in the same component of $R_k(G_{m,k})$ and every reconfiguration sequence from $I$ to $J$ has length at least $\frac{k^2}{4}\cdot m$.
\end{lemma}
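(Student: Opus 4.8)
The plan is to prove two things: first, that $I$ and $J$ are reconfigurable, and second, the lower bound of $\tfrac{k^2}{4}\cdot m$ on the length of every reconfiguration sequence between them. The first part should be easy: I would exhibit an explicit reconfiguration sequence. Roughly, I would shuttle the tokens one at a time across the ``highway'' of path intervals $v_{i,i+1}$, reading off the structure from Figure~\ref{fig:G(6,3)-letters}. The tokens start on $v_1,\dots,v_k$ and need to reach $b_1,\dots,b_k$; the long intervals $r_1,\dots,r_k$ together with the base intervals $b_i$ make room for the tokens to be ``parked'' on the right, while the $\ell_i$'s and the $a_i$'s provide parking on the left. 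One moves the rightmost token first (it has the least obstruction), gets it onto $b_k$ via $r_k$, then the next, and so on. The point is only to establish membership in the same component, so any legal sequence will do.

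For the lower bound, the natural approach is a potential/amortization argument. I would assign to each configuration a weight that measures ``how far along the highway'' the tokens collectively are, and then show (a) the total weight must change by $\Omega(k^2 m)$ between $I$ and $J$, and (b) a single token move changes the weight by at most $O(1)$. The subtlety — and I expect this to be the main obstacle — is that the tokens cannot move independently: the long intervals $\ell_i$ and $r_i$ act as ``locks'' that force the tokens to cross the $v$-region in a coordinated, almost synchronized fashion. Concretely, $\ell_i$ blocks positions $a_i$ through $v_{N-(k-1)-i}$, and $r_i$ blocks $v_{k-i+1}$ through $b_i$; so whenever a long interval is occupied by a token it pins down a large stretch of the path, and whenever it is unoccupied the other tokens must stay clear of that stretch. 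The combinatorial heart is to show that to get even one token from the left block to the right block, essentially all $k$ tokens must shift by roughly $m$ positions along the highway — this is where the factor $m$ and one factor of $k$ come from — and that this must be repeated $\Theta(k)$ times (once per token that needs to cross), giving the second factor of $k$.

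I would formalize this by tracking, for a configuration $C=(c_1,\dots,c_k)$, a quantity like $\Phi(C)=\sum_{s=1}^{k}\,(\text{index of }c_s\text{ along the left-to-right ordering of }\calI_{m,k})$, or a suitably truncated/clamped version that only counts progress within the $v$-highway. The key structural lemma would be: in any configuration reachable from $I$, the ``active'' token (the unique one currently allowed to traverse the highway, by the locking argument) is sandwiched between consecutive long intervals, and moving it forward by one highway step is the only way to increase $\Phi$, costing one move. Summing, $\Phi(J)-\Phi(I)$ telescopes to something of order $k\cdot(k\cdot m)/$constant, and since each move changes $\Phi$ by $O(1)$ we get the bound. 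I would present Figure~\ref{fig:G(6,3)-letters} as the running example ($k=3$, $m=6$) to make the locking pattern visible, and I would be careful to state the locking lemma precisely, since the whole lower bound rests on it — a naive counting argument that ignores the locks would only give $\Omega(k\cdot m)$, losing a factor of $k$.

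One more check I would want to make explicit: that the independent sets really do have size exactly $k$ throughout, and that the ``parking spots'' $a_1,\dots,a_{k-1}$ on the left and the slack among $b_1,\dots,b_k,r_1,\dots,r_k$ on the right are exactly enough — not more — to allow the reconfiguration, so that the coordination is genuinely forced rather than an artifact of a wasteful strategy. This is the sanity check that the construction is tight; it feeds back into the lower-bound argument because it tells us there is no ``shortcut'' configuration that bypasses the synchronized crossing.
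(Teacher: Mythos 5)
Your intuition about the role of the long intervals as ``locks'' forcing coordinated, back-and-forth token movement is correct, and it is also the intuition driving the paper's proof. However, the formalization you propose — a potential function $\Phi(C)=\sum_{s}\,\mathrm{index}(c_s)$, or a truncated variant — cannot work, and this is not a fixable detail but a fundamental obstruction. Any such $\Phi$ is bounded by $k\cdot |V(G_{m,k})|=k\cdot\Theta(m+k)$, so $\Phi(J)-\Phi(I)=O(k(m+k))$; this falls short of the target $\Omega(k^2 m)$ whenever, say, $m$ is much larger than $k$. More to the point, a configuration-valued potential only certifies \emph{net} displacement, whereas the whole point of this construction is that the tokens must shuttle \emph{back and forth} across the highway about $k$ times, so the total travel vastly exceeds the net travel. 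You do flag that a naive count gives only $\Omega(km)$ and that the locks must somehow recover the missing factor of $k$, but the ``structural lemma'' you sketch (a single ``active'' token per configuration, $\Phi$ increasing one step per move) is not demonstrated and would not salvage the potential argument, since the issue is cancellation over time, not a bound on the per-move change.

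The paper handles this by identifying a sequence of \emph{checkpoint} configurations $C_0=I, C_1, \dots, C_{2k}=J$ and proving, via a bottleneck argument using the graphs $H_j$ (the subgraph of intervals that can hold tokens while $C_j$ is ``in force''), that \emph{every} reconfiguration sequence from $I$ to $J$ must pass through each $C_j$ in order. It then shows that between $C_{2i-1}$ and $C_{2i}$ the $\Theta(k-2i)$ ``heavy'' tokens cannot use any long interval (by a small independence-number computation on $H_{2i-1}\setminus N(u)$ for each long $u$), so each must traverse the path intervals for $\Omega(m)$ steps, yielding $\Omega((k-2i)m)$ per phase and $\Omega(k^2 m)$ total. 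This checkpoint decomposition is precisely what turns ``back-and-forth'' travel into a sum of one-way distances; your proposal lacks any analogue of it. If you want to keep a potential-style presentation, you would effectively need $\Phi$ to encode the current checkpoint phase, at which point you have reinvented the paper's argument.

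Two smaller points. First, for the reconfigurability part, your plan (move the rightmost token all the way right, then the next, etc.) does not work as stated: with $\ell_i$ and $r_i$ spanning almost the whole highway, you cannot park a token on $b_k$ while the others sit on $v_1,\dots,v_{k-1}$ without conflicting with $r_k$; the parking spots on the left ($a_i$) and right ($b_i$) must be used in the interleaved pattern given by the $C_j$'s. Second, your final ``tightness'' sanity check is a reasonable instinct but is not actually needed for the lower bound — the argument must rule out \emph{every} reconfiguration sequence, which the checkpoint-bottleneck argument does directly without appealing to the construction being ``exactly'' tight.
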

\begin{proof}
    We put most of the effort to prove the second part of the statement, that every reconfiguration sequence from $I$ to $J$ has length at least $\frac{k^2}{4}\cdot m$.

    We define a sequence of independent sets (see Figure~\ref{fig:example-reconfiguration}):
    \begin{align*}
        C_0 &= (v_1, \dots, v_k) = I, \\
        C_1 &= ( v_1, \dots, v_{k-1}, r_1 ), \\
        C_2 &= ( \ell_1, v_{N-(k-1)}, \dots, v_N, b_1 ), \\
        & \ \ \vdots \\
        C_{2i-1} &= ( a_{i-1}, \dots, a_{1}, v_{1}, \dots, v_{k-1}, r_{i} ), \\
        C_{2i} &= ( \ell_i, v_{N-(k-i-2)}, \dots, v_{N}, b_1, \dots, b_i ), \\
        & \ \ \vdots \\
        C_{2k} &= (b_1,\dots,b_k) = J.
    \end{align*}

    \begin{figure}[!h]
        \centering
        \includegraphics{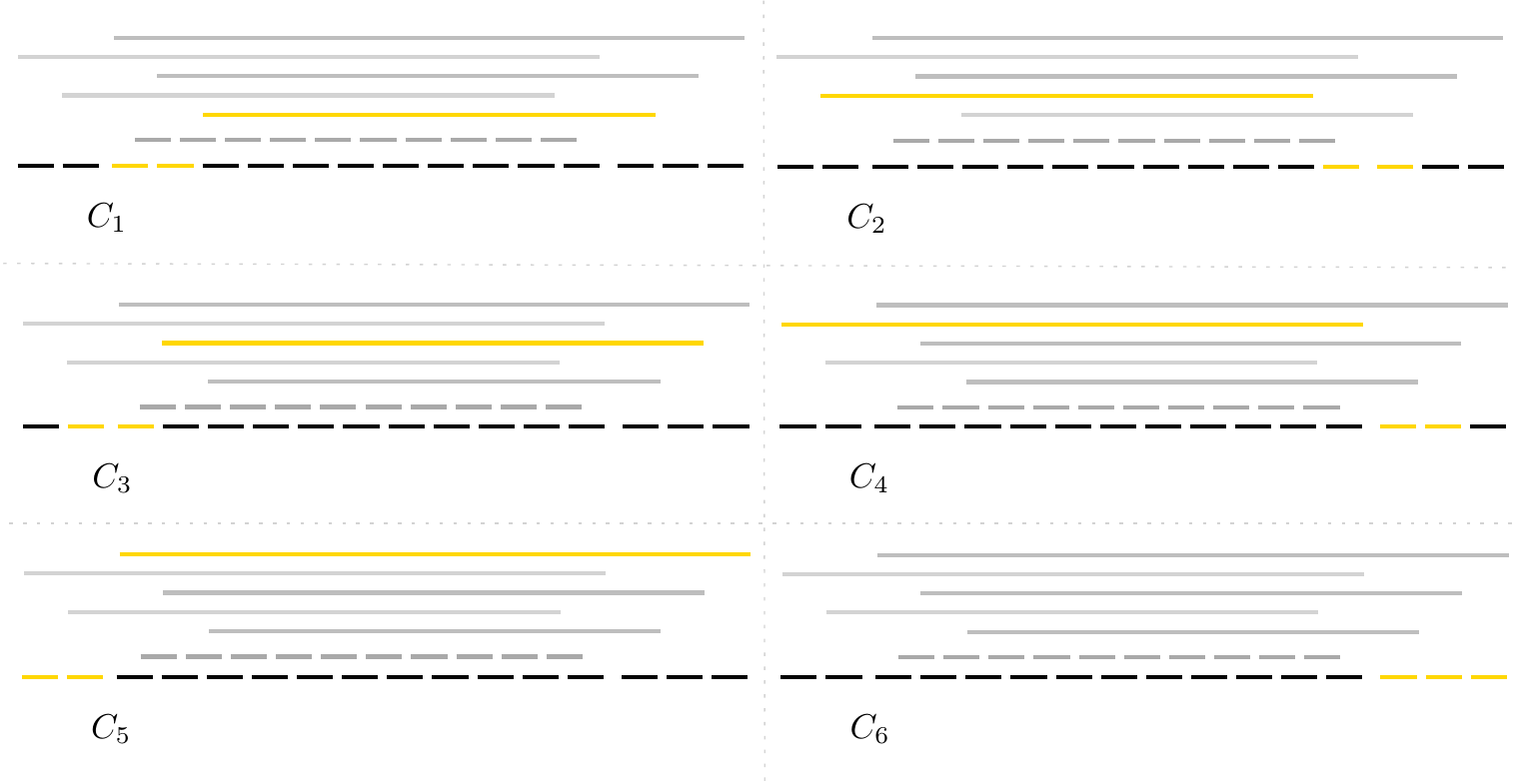}
        \caption{The sets $C_1,\dots,C_6$ in $G_{6,3}$.}
        \label{fig:example-reconfiguration}
    \end{figure}

    It is easy to construct a path from $C_j$ to $C_{j+1}$ in $R_k(G_{m,k})$ for $j\in \set{0,\dots,2k-1}$ which proves that $I,J$ are in the same component of $R_k(G_{m,k})$.

    Let $(K_0,\dots,K_M)$ be a path in $R_k(G_{m,k})$ from $I$ to $J$. The proof will follow from two claims. The first one is that $(C_0,\dots,C_{2k})$ is a subsequence of $(K_0,\dots,K_M)$, and the second one is that for every $i\in\set{1,\dots,k-1}$ every path from $C_{2i-1}$ to $C_{2i}$ is of length at least $(k-2i-1) \cdot m$. A symmetric argument can be used to bound the distance between $C_{2i}$ and $C_{2i+1}$ which we omit here as it would only improve the final lower bound by a constant factor.

    Let $P$ be the set of path intervals in $G_{m,k}$. Define for each $i \in\set{1,\dots,k-1}$ the following graphs:
    \begin{align*}
        H_{2i-1} &= G_{m,k}[\{ a_{i-1}, \dots, a_1, v_1, \dots, v_N, b_1, \dots, b_i \} \cup P], \\
        H_{2i} &= G_{m,k}[\{ a_{i}, \dots, a_1, v_1, \dots, v_N, b_1, \dots, b_i \} \cup P].
    \end{align*}

    Note that $C_0=K_0$ and $C_{2k} = K_M$, so $C_0$ and $C_{2k}$ occurs in $(K_0,\dots,K_M)$. Fix $j\in \set{0,\dots,2k-2}$. Suppose that the independent set $C_j$ occurs in $(K_0,\dots,K_M)$ and fix such an occurrence. We will argue that $C_{j+1}$ must occur afterwards in the sequence.

    Observe that all base intervals from $C_j$ are in $H_j$. However $b_k \notin H_j$ and $b_k \in K_M$, hence to reconfigure from $C_j$ to $K_M$ eventually a token has to be moved to some base interval not in $H_j$. Thus, let $X_j$ be the set of base intervals not in $H_j$, i.e.
    \[
        X_j =\begin{cases}\set{a_{k-1}\dots,a_{i}} \cup \set{b_{i+1},\dots,b_{k}} &\text{if} \ j \ \text{is odd,} \\
    \set{a_{k-1}\dots,a_{i+1}} \cup \set{b_{i+1},\dots,b_{k}} &\text{if} \ j \ \text{is even.}
    \end{cases}
    \]

    Note that the only neighbours of intervals in $X_j$ are long. Let $Y$ be the first independent set in $(K_0,\dots,K_M)$ that occurs after the fixed occurence of $C_j$ and contains a long interval $u_0$ neighbouring some element in $X_j$. We claim that $Y=C_{j+1}$.

    First, we show that $u_0 = \ell_{i}$ if $j = 2i-1$ and $u_0 = r_{i+1}$ if $j = 2i$ respectively. Assume for now that $j=2i-1$. Observe that for all $p\in\set{1,\dots,i-1}$ we have $N(\ell_p)\cap X_j =\emptyset$ and consequently $u_0\neq \ell_p$. On the other hand, for all $p\in\set{i+1,\dots,k-1}$ we have $\alpha \left( H_j \backslash N(\ell_p) \right)< k-1$. Therefore, whenever $u_0=\ell_p$ there is a token in $Y$ that is not in $H_j$. This contradicts the minimality of~$Y$. Moreover, for all $p\in\set{1,\dots,i+1}$ we have $N(r_p)\cap X_j =\emptyset$ in turn implying that $u_0\neq r_p$. On the other hand, for all $p\in\set{i+2,\dots,k-1}$ we have $\alpha \left( H_j \backslash N(r_p) \right)< k-1$, thus, $u_0\neq r_p$. This leaves only one possible option of $u_0=\ell_{i}$. The case \(j=2i\) follows a symmetric argument. See Figure~\ref{fig:H_3}. 

    Recall that all $k-1$ elements of $Y \backslash \set{u_0}$ must be in $H_j$. It is easy to see that when $j=2i-1$ then $H_j \backslash N(\ell_{i+1})$ has exactly one independent set of size $k-1$, namely: $\set{v_{N-(k-i-2)}, \dots,v_N,b_1\dots,b_i}$, symmetrically when $j=2i$ then $H_j \backslash N(r_{i+1})$ has exactly one independent set of size $k-1$, namely: $\set{a_i, \dots,a_1,v_1,\dots,v_{k-1}}$. This proves that $Y=C_{j+1}$.

    \begin{figure}[!h]
        \centering
        \includegraphics{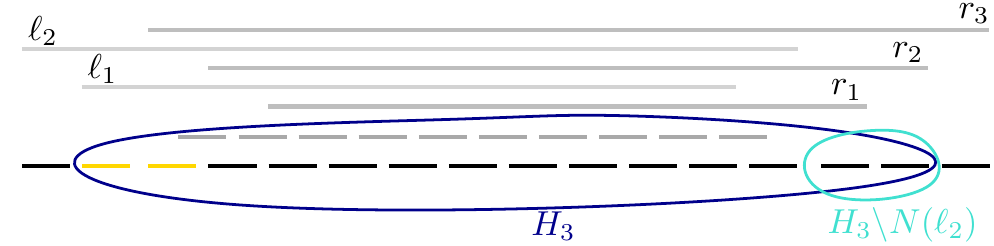}
        \caption{The set $H_3$ for $G_{6,3}$. We interpret $H_3 \backslash N(u)$ as the space where $k-1$ tokens can "hide". All base neighbours of $\ell_1,r_1$, and $r_2$ are in $H_3$. Also, $\alpha \left( H_3 \backslash N(r_3) \right)=1<2$. This gives $u_0 = \ell_2$.}
        \label{fig:H_3}
    \end{figure}

    Let us now prove that for a fixed $i\in\set{1,\dots,k-1}$ every path from $C_{2i-1}$ to $C_{2i}$ in $R_k(G_{m,k})$ is of length at least $(k-2i-1) \cdot m$. 

    Fix the shortest reconfiguration sequence from $C_{2i-1}$ to $C_{2i}$. As tokens do not interchange their relative positions, note that tokens starting at the positions $(v_1, \dots, v_{k-2i})$ in $C_{2i-1}$ must finish at the positions $( v_{N-(k-2i-1)}, \dots, v_N )$ in $C_{2i}$. We call these tokens \emph{heavy}. Their left to right ordinal numbers are $i\dots,k-i$, and there are exactly $s:=k-2i-1$ of them.

    We prove that a heavy token cannot use any of the long intervals during the reconfiguration. By the first part of the proof we know that on the shortest path from $C_{2i-1}$ to $C_{2i}$ only base intervals from $H_{2i-1}$ can be used. For each long interval $u$ we define $H_u^\ell$ as a graph induced by all intervals $v$ in $H_{2i-1}\backslash N(u)$ completely to the left of $u$. Analogously, define $H_u^r$ as the graph induced by all $v\in H_{2i-1}\backslash N(u)$ completely to the right to $u$. Finally, put
    $$n_\ell(u) = \alpha(H_u^\ell) \ \text{and} \ n_r(u) = \alpha(H_u^r).$$
    Assume that a heavy token uses a fixed long interval $w$ on the path from $C_{2i-1}$ to $C_{2i}$. Armed with the knowledge of the ordinal numbers of the heavy tokens, we see that: $n_\ell(w) \ge i-1$ and $n_r(w) \ge i$. Elementary computation shows that for every long interval $u$ either $n_\ell(u) < i-1$ or $n_r(u) < i$, which proves that no such long interval $w$ exists.

    As heavy tokens cannot use long intervals, each of them has to use base and path intervals forcing it to make at least $2(N-s+1) \ge m$ steps. Therefore, we need at least $s\cdot m$ steps in the path. 

    Summing up all required steps, we conclude, that every path from $I$ to $J$ in $R_k(G_{m,k})$ has length at least $\frac{k^2}{4}\cdot m$.

\end{proof}

\section{Hardness result for incomparability graphs}

In this section, we present a simple reduction showing that \isr\ is PSPACE-hard on incomparability graphs in general. Note that interval graphs are incomparability graphs of interval orders.
The proof exhibits a reduction from \(H\)\textsf{-Word Reachability} defined in~\cite{wrochnaReconf}. For the readers' convenience we state the definition of this problem here. If \(H\) is a digraph (possibly with loops) and \(a = a_1 a_2 \dots a_n \in V(H)^*\) then \(a\) is an \(H\)\emph{-word}, if for any \(i \in \set{1, \dots, n - 1}\) we have \(a_i a_{i + 1} \in E(H)\).
In the \(H\)\textsf{-Word Reachability} we are given two \(H\)-words of the same length \(a\) and \(b\), and the question is whether one can transform \(a\) into \(b\) by changing one letter at a time in such a way that each intermediate word is an \(H\)-word.

\begin{theorem}[\cite{wrochnaReconf}, Theorem 3]\label{thm:wrochnaHardness}
    There exists a digraph \(H\) for which the \(H\)\textsf{-Word Reachability} is PSPACE-complete.
\end{theorem}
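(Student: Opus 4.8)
The plan is to prove the two directions of PSPACE-completeness separately; membership is routine and the hardness reduction carries the content. For membership, fix a digraph \(H\) and two \(H\)-words \(a,b\) of a common length \(n\). The ambient reconfiguration graph has as vertices all \(H\)-words of length \(n\) --- at most \(|V(H)|^{n}\) of them --- with an edge between two such words that differ in exactly one coordinate; deciding connectivity of \(a\) and \(b\) in a graph of this size lies in \(\mathrm{NPSPACE}\) (nondeterministically walk from \(a\), storing only the current word and a binary step counter capped at \(|V(H)|^{n}\), and accept upon reaching \(b\)), hence in PSPACE by Savitch's theorem. For hardness, fix once and for all a deterministic single-tape Turing machine \(M\) deciding a PSPACE-complete language \(L\) within space \(p(\cdot)\) for a fixed polynomial \(p\), halting only in a distinguished accepting or rejecting configuration and never using more than the first \(p(|x|)\) cells on input \(x\). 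Then \(x\in L\) iff the accepting configuration of \(M\) on \(x\) is reachable from the initial configuration within space \(p(|x|)\), and this configuration-reachability problem is PSPACE-complete. We build a single digraph \(H\), independent of \(x\), and a polynomial-time map sending an input \(x\) of length \(n\) to two \(H\)-words of length \(p(n)+2\) that are reconfigurable iff \(x\in L\).

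The vertex set of \(H\) has three kinds of vertices: a \emph{tape vertex} for every tape symbol in \(\Gamma\) (each carrying a loop, so that equal symbols on adjacent cells are compatible); a \emph{head vertex} for every pair \((q,a)\in Q\times\Gamma\) recording that the scanned cell holds \(a\) with \(M\) in state \(q\); a left and a right \emph{endmarker vertex}; and, for every transition \((q,a)\mapsto(q',b,\mathrm{dir})\) of \(M\), a small family of \emph{transition vertices} that will serve as the micro-states of a single rewrite burst. The directed edges among tape and endmarker vertices are chosen so that any two generic neighbouring cells of a tape string are left-to-right compatible, with the two endmarker vertices pinned to the first and last coordinate, and the head vertex is compatible with its neighbours in the obvious way. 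An \(H\)-word of length \(p(n)+2\) that \emph{encodes a configuration} of \(M\) is then: the left endmarker, followed by \(p(n)\) tape/head vertices containing exactly one head vertex at the head position, followed by the right endmarker. The start and target \(H\)-words are the encodings of the initial and accepting configurations of \(M\) on \(x\), both computable in polynomial time from \(x\).

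For the forward direction, a single step of \(M\) touching the head cell \(i\) and a neighbour \(i\pm1\) is simulated by a fixed-length burst of single-letter rewrites routed through the transition vertices: overwrite cell \(i\)'s head vertex by the transition vertex recording \((q,a,q',b,\mathrm{dir})\); then overwrite cell \(i\pm1\)'s tape vertex \(c\) by the head vertex \((q',c)\); then overwrite cell \(i\) by the written symbol \(b\). Each intermediate string is an \(H\)-word exactly because we declare the relevant transition vertex adjacent to the old head vertex, to every tape vertex that may legitimately occupy the adjacent cell, and to the new head vertices, and to nothing else. Thus a computation path of \(M\) from the initial to the accepting configuration yields a reconfiguration of the corresponding \(H\)-words; this direction is a direct, if tedious, verification.

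The crux --- and the step I expect to be the real obstacle --- is soundness: ruling out cheating. One must prove that every \(H\)-word reachable from a configuration encoding is itself either a configuration encoding or one of the intermediate strings of the bursts above; in particular the head can be neither duplicated, deleted, pushed off the tape, nor teleported, and a transition vertex once written can be cleared only by completing the very burst it belongs to. This is forced by keeping \(H\) sparse precisely where it matters --- the transition vertices have tightly restricted in- and out-neighbourhoods --- together with the invariant that along every reachable word there is exactly one ``head-like'' coordinate (a head vertex or a head-carrying transition vertex) whose evolution is then deterministic. Granting this structural lemma, which is the technical heart of \cite{wrochnaReconf}, the reconfiguration graph restricted to words reachable from the start word, after contracting each deterministic burst to a single edge, is isomorphic to the configuration graph of \(M\) on \(x\); hence the two \(H\)-words are reconfigurable iff \(x\in L\). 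Together with the membership argument, and since \(H\) does not depend on \(x\), this proves \cref{thm:wrochnaHardness}.
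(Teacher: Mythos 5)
This statement is not proved in the paper at all: it is imported verbatim as Theorem~3 of \cite{wrochnaReconf} and used as a black box in the reduction for the incomparability-graph hardness result, so there is no in-paper argument to match yours against. Judged on its own, your proposal has a genuine gap, and you name it yourself: the soundness step (``ruling out cheating'') is not proved but explicitly granted, with the remark that it is ``the technical heart of \cite{wrochnaReconf}.'' Since that step \emph{is} the content of the theorem being proved, deferring it to the cited source makes the argument circular as a proof of \cref{thm:wrochnaHardness}; the membership part (Savitch) and the completeness direction (simulating a machine step by a burst of single-letter rewrites) are the routine halves.

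Moreover, the construction as you describe it does not force the invariant you rely on. For configuration encodings to be \(H\)-words, a head vertex \((q,c)\) must be compatible on both sides with ordinary tape vertices; but then, in any word whose letters at positions \(i-1,i,i+1\) are tape vertices, the single-letter move rewriting position \(i\) to \((q,c)\) is itself legal, so a second ``head-like'' coordinate can be spawned far from the real head, and your claimed invariant that every reachable word has exactly one such coordinate simply fails for this \(H\). Keeping the \emph{transition} vertices sparse does not help, because the cheat uses only tape and head vertices. Repairing this requires an extra mechanism --- e.g.\ exploiting that \(H\) is a \emph{digraph} and words are read left to right, using two copies of the tape alphabet (``left of head'' and ``right of head'') with no arcs from the right copy back to the left copy, so that two heads would force a forbidden transition between them --- and proving the resulting structural lemma is exactly the nontrivial work of Wrochna's proof. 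Without that, your argument establishes only the easy direction (yes-instances of the machine problem map to reconfigurable word pairs), not the equivalence.
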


\begin{theorem}\label{thm:hardness}
    There exists a constant \(w \in \mathbb{N}\), such that \isr\ is PSPACE-hard on incomparability graphs of posets of width at most \(w\).
\end{theorem}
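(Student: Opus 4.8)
The plan is to reduce from \(H\)\textsf{-Word Reachability} for the specific digraph \(H\) provided by \cref{thm:wrochnaHardness}. Given an instance consisting of two \(H\)-words \(a = a_1 \dots a_n\) and \(b = b_1 \dots b_n\), I will build a poset \(P\) (whose incomparability graph \(G = \mathrm{Inc}(P)\) is the input to \isr) together with two independent sets \(I_a\) and \(I_b\) of the same size, so that reconfiguration sequences of tokens correspond exactly to valid rewriting sequences of \(H\)-words.

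First I would set up the gadget. For each position \(p \in \set{1, \dots, n}\) I create a block of \(|V(H)|\) vertices \(x_{p, v}\), one for each \(v \in V(H)\), representing "position \(p\) currently holds letter \(v\)". I want the following in \(G = \mathrm{Inc}(P)\): (i) within a block, the \(|V(H)|\) vertices form a clique (so they are pairwise comparable in \(P\)), forcing at most one token per block; together with a size constraint of \(n\) tokens this forces exactly one token per block, i.e. the token configuration \emph{is} a length-\(n\) string over \(V(H)\); (ii) between consecutive blocks \(p\) and \(p+1\), the vertices \(x_{p,u}\) and \(x_{p+1,w}\) are adjacent in \(G\) (incomparable in \(P\)) if and only if \(uw \notin E(H)\) — equivalently, comparable precisely when the pair is a legal \(H\)-edge; (iii) non-consecutive blocks are entirely comparable (a complete bipartite structure in \(P\)), so they never interfere. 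Then an independent set of size \(n\) in \(G\) picks one vertex per block such that every consecutive pair is incomparable-free, i.e. forms a legal \(H\)-word; a token slide moves one token to a \(G\)-neighbor in the \emph{same} block (changing one letter while keeping both adjacent consecutive-pair constraints satisfied), which is exactly a one-letter \(H\)-word rewrite. Set \(I_a = \set{x_{p, a_p} : p}\) and \(I_b = \set{x_{p, b_p} : p}\).

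The remaining point is bounding the width. Realize \(P\) as a layered poset: order the blocks \(1 < 2 < \dots < n\) "globally" by making \(x_{p, \cdot}\) below \(x_{q, \cdot}\) whenever \(p < q\) and the pair is meant to be comparable — this is automatic for non-consecutive blocks and encodes the edge relation for consecutive ones. One has to check this relation is actually transitive and antisymmetric (a quick case analysis: since \(x_{p,u} < x_{p+1,w}\) only when \(uw \in E(H)\), and \(x_{p,u} < x_{q,v}\) for all \(u, v\) when \(q \geq p+2\), transitivity across three consecutive blocks reduces to the trivial statement that the middle step can be "skipped" by the long edge). An antichain can contain at most one vertex from block \(p\) and block \(p+1\) combined only if \dots in fact an antichain is a set of pairwise incomparable vertices, which by construction can hit each pair of consecutive blocks in a bounded way; more carefully, chain the blocks so that all of block \(p\) sits strictly below all of block \(p+2\), whence any antichain meets at most two consecutive blocks and thus has size at most \(2|V(H)|\). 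Taking \(w = 2 |V(H)|\) — a constant depending only on Wrochna's fixed digraph \(H\) — gives the width bound. Finally, \(|V(G)| = n |V(H)|\) is polynomial and \(G\) (and its incomparability structure) is clearly computable in polynomial time, so the reduction is valid.

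The correctness argument is then a routine bijection-chasing: I would state and verify the two directions (an \(H\)-word rewriting sequence yields a reconfiguration sequence and vice versa), each step being a direct translation. The main obstacle — really the only place needing care — is item (ii)/(iii) together with the width analysis: one must be sure the partial order defined by "comparable exactly on legal consecutive edges and on all non-adjacent-block pairs" is genuinely a partial order (transitivity is the thing to check) and that its width is bounded by a constant independent of \(n\). Everything else is bookkeeping.
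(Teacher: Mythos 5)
Your construction is the same poset as the paper's \(P_n(H)\) (comparable on consecutive blocks exactly when the pair is an \(H\)-edge, always comparable across non-consecutive blocks), with the same width bound \(2|V(H)|\), the same correspondence between chains and \(H\)-words, and the same observation that a token slide within a block is exactly a one-letter rewrite; this is essentially identical to the paper's proof. One slip: in item (i) you write that a clique in \(G=\mathrm{Inc}(P)\) makes the block's vertices ``pairwise comparable in \(P\)'' --- it should say pairwise \emph{incomparable} (a clique in the incomparability graph is an antichain in the poset); your items (ii) and (iii) use the correct convention, so the rest of the argument is unaffected.
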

\begin{proof}
    We demonstrate a reduction from \(H\)\textsf{-Word Reachability} for arbitrary \(H\); the result will follow from \cref{thm:wrochnaHardness}.

    Fix an instance of \(H\)\textsf{-Word Reachability} consisting of two \(H\)-words \(a\) and \(b\) of equal length $n$. We will construct a poset of width at most \(2 \card{V(H)}\), and two independent sets \(A, B\) in its incomparability graph, such that $A$ is reconfigurable to \(B\) if and only if our starting instance is a yes instance of \(H\)\textsf{-Word Reachability}.
    Define the poset \(P_n(H)\) as \((V(H) \times \set{1,\dots,n}, \prec)\) where $\prec$ is defined as follows:
    \[
        (x, i) \prec (y, j)  \iff \paren*{ j = i + 1 \text{ and } xy \in E(H)} \text{ or } \paren*{ j > i + 1 }\text{.}
    \]

    By the definition of \(\prec\) each set of the form \(V(H) \times \set{i}\) is an antichain, thus for any chain \(C\) in \(P_n(H)\) of cardinality \(n\) and any $i\in\set{1,\dots,n}$, we have $|(V(H)\times\set{i}) \cap C| = 1$. Therefore, any chain \(C\) of cardinality $n$, can be written as \(C = \set{(x_1, 1), (x_2, 2), \dots, (x_n, n)}\). 
    Observe that for each \(i\in \set{1,\dots,n-1}\) we have \((x_i, i) \prec (x_{i + 1}, i + 1) \iff x_i x_{i + 1} \in E(H)\). This implies that the first coordinates \(x_1 x_2 \dots x_n\) of the elements of chain $C$ form an \(H\)-word. 
    Conversely, given an \(H\)-word consisting of \(n\) letters \(y_1 y_2 \dots y_n\) the set \(\set{(y_1, 1), (y_2, 2), \dots, (y_n, n)}\) is a chain of cardinality \(n\) in \(P_n(H)\).
    It follows that a word \(x_1 x_2 \dots x_n\) is an \(H\)-word if and only if \(\set{(x_1, 1), \dots, (x_n, n)}\) is an independent set in the incomparability graph \(\text{Inc}(P_n(H))\).

    Let \(a = a_1 a_2 \dots a_n\) and \(b = b_1 b_2 \dots b_n\) be the two given $H$-words of length $n$. We define \(A = \set{(a_1, 1), (a_2, 2), \dots, (a_n, n)}\) and \(B = \set{(b_1, 1), (b_2, 2), \dots, (b_n, n)}\). These are two independent sets in \(\text{Inc}(P_n)\). Using the fact that for each $i \in \set{1,\dots,n}$ the set \(V(H) \times \set{i}\) is a clique in \(\text{Inc}(P_n(H))\), we infer that each edge in \(R_n(P_n)\) corresponds to a move of the form \(((x, i), (y, i))\) for some $i \in \set{1,\dots,n}$. Thus \(A\) is reconfigurable into \(B\) if and only if one can transform \(a\) into \(b\) one letter at a time keeping each intermediate word an \(H\)-word.

    All that remains is to observe that we can construct the incomparability graph of \(P_n(H)\) together with the sets \(A\) and \(B\) for a fixed \(H\) in logarithmic space, and that the width of \(P_n(H)\) is always at most \(2\card{V(H)}\).
\end{proof}

\bibliographystyle{abbrv}
\bibliography{bibliography}
\end{document}